\spnewtheorem{lem}[theorem]{Lemma}{\bfseries}{\itshape}
\spnewtheorem{rem}[theorem]{Remark}{\bfseries}{\upshape}
\spnewtheorem{exm}[theorem]{Example}{\bfseries}{\upshape}
\spnewtheorem{cor}[theorem]{Corollary}{\bfseries}{\itshape}
\spnewtheorem{quest}[theorem]{Question}{\bfseries}{\upshape}
\spnewtheorem{prb}[theorem]{Problem}{\bfseries}{\upshape}
\spnewtheorem{prop}[theorem]{Proposition}{\bfseries}{\itshape}
\DeclareMathOperator{\rank}{rank}
\DeclareMathOperator{\Tor}{Tor}
\DeclareMathOperator{\ab}{ab}
\DeclareMathOperator{\gr}{gr}
\DeclareMathOperator{\Hilb}{Hilb}
\DeclareMathOperator{\GL}{GL}
\DeclareMathOperator{\Aut}{Aut}
\DeclareMathOperator{\IA}{IA}
\DeclareMathOperator{\Conf}{Conf}
\DeclareMathOperator{\Mod}{Mod}
\newcommand{\N}{\mathbb{N}}
\newcommand{\R}{\mathbb{R}}
\newcommand{\Q}{\mathbb{Q}}
\newcommand{\C}{\mathbb{C}}
\newcommand{\Z}{\mathbb{Z}}
\newcommand{\RR}{\mathcal{R}}
\def\cprime{$'$}
\newcommand{\fh}{\mathfrak{h}}
\newcommand{\fg}{\mathfrak{g}}
\newcommand{\fm}{\mathfrak{m}}
\newcommand{\cP}{\mathcal{P}}
\newcommand{\cR}{\mathcal{R}}
\DeclareMathAlphabet{\pazocal}{OMS}{zplm}{m}{n}
\newcommand{\cI}{\pazocal{I}}
\newcommand{\vB}{vB}
\newcommand{\vP}{vP}
\newcommand{\wB}{wB}
\newcommand{\wP}{wP}
\newcommand{\s}{s}
\newcommand{\surj}{\twoheadrightarrow}
\newcommand{\inj}{\hookrightarrow}
\newcommand{\abs}[1]{\left| #1 \right|}
\def\dot{\mathchar"013A}  
\newcommand{\hdot}{{\raise1pt\hbox to0.35em{\Huge $\dot$}}} 
\newcolumntype{M}[1]{>{\centering\arraybackslash}m{#1}}
\newcolumntype{L}[1]{>{\arraybackslash}m{#1}}
\newcolumntype{N}{@{}m{0pt}@{}}
\begin{document}

\title*{The pure braid groups and their relatives}
\titlerunning{The pure braid groups and their relatives}

\author{Alexander~I.~Suciu and He Wang}
\institute{Alexander~I.~Suciu \at 
Department of Mathematics,
Northeastern University,
Boston, MA 02115, USA\\ 
\email{a.suciu@neu.edu}\\
Supported in part by the National Security 
Agency (grant H98230-13-1-0225) and the Simons Foundation 
(collaboration grant for mathematicians 354156)
\and 
He Wang \at 
Department of Mathematics,
Northeastern University,
Boston, MA 02115, USA \\ 
\email{wang.he1@husky.neu.edu}
}

\setcounter{minitocdepth}{1}
\maketitle
%\dominitoc

\abstract{In this mostly survey paper, we investigate the resonance varieties, 
the lower central series ranks, and the Chen ranks, as well as the 
residual and formality properties of several families of braid-like groups: 
the pure braid groups $P_n$, the welded pure braid groups $\wP_n$, the 
virtual pure braid groups $\vP_n$, as well as their `upper' variants, 
$\wP_n^+$ and $\vP_n^+$.  We also discuss several natural homomorphisms 
between these groups, and various ways to distinguish among the pure braid 
groups and their relatives.}

\keywords{Pure braid groups, welded pure braid group, virtual pure braid groups, 
lower central series, Chen ranks, resonance varieties, residually nilpotent, formality.}

\section{Introduction}
\label{sect:intro}

\subsection{Cast of characters}
\label{subsec:characters}

Let $F_n$ be the free group on generators $x_1,\dots, x_n$, and let 
$\Aut(F_n)$ be its automorphism group. Magnus \cite{Magnus35} 
showed that the map $\Aut(F_n)\to \GL_n (\Z)$ which sends an automorphism 
to the induced map on the abelianization $(F_n)^{\ab}=\Z^n$ is surjective.  
Furthermore, the kernel of this homomorphism, 
denoted by $\IA_n$, is  generated 
by automorphisms $\alpha_{ij}$ and $\alpha_{ijk}$ 
($1\leq i\neq j\neq k\leq n$) which send $x_i$ to  
$x_jx_ix_j^{-1}$ and $x_ix_jx_kx_j^{-1}x_k^{-1}$, respectively, 
and leave invariant the remaining generators of $F_n$.
The subgroup generated by the automorphisms $\alpha_{ij}$ 
and $\alpha_{ijk}$ with $i<j<k$ is denoted by $\IA_n^+$.

An automorphism of $F_{n}$ is called a `permutation-conjugacy'  
if it sends each generator $x_i$ to a conjugate of $x_{\tau(i)}$, 
for some permutation $\tau\in S_n$.
The classical Artin braid group $B_n$ is the subgroup of $\Aut(F_n)$ 
consisting of those permutation-conjugacy automorphisms which fix the word 
$x_1\cdots x_n\in F_n$, see for instance Birman's book \cite{Birman74}.
The kernel of the canonical projection from $B_n$ to the symmetric group $S_n$
is the pure braid group $P_n$ on $n$ strings. As shown by Fadell, Fox, 
and Neuwirth \cite{Fadell-Neuwirth, Fadell-Fox}, a classifying space for $P_n$ 
is $\Conf_n(\C)$, the configuration space of $n$ ordered points on the 
complex line. 

\begin{figure}[t]
\center
\begin{equation*}
\begin{tikzpicture}[baseline=(current  bounding  box.center)]
\matrix (m) [matrix of math nodes, row sep={3.8em,between origins}, 
column sep={3.8em,between origins}]{
 \textcolor{black}{\IA^+_n}&&\textcolor{black}{\IA_n}&
 &\textcolor{black}{\Aut(F_n)}&&\textcolor{black}{\GL_n(\Z)}&\\ 
 &\textcolor{blue}{\vP_n^+}&&\textcolor{blue}{\vP_n}&
 &\textcolor{blue}{\vB_n}&&\textcolor{blue}{S_n}\\
\textcolor{blue}{\wP_n^+}&&\textcolor{blue}{\wP_n}&
&\textcolor{blue}{\wB_n}&&\textcolor{blue}{S_n}&\\
 &&&\textcolor{black}{P_n}&&\textcolor{black}{B_n}&
 &\textcolor{black}{S_n}&\\};
\path[->>]
(m-1-5) edge[ ] (m-1-7)
(m-2-6) edge[blue] (m-2-8)
(m-4-6) edge[black] (m-4-8)
(m-2-2) edge[blue] (m-3-1)
(m-2-4) edge[blue] (m-3-3) 
(m-2-6)edge[blue] (m-3-5) 
;
\path[right hook->]
(m-1-1) edge[ ] (m-1-3)
(m-3-1) edge[ ] (m-1-1)
(m-1-3) edge[ ] (m-1-5)
(m-2-2) edge[dotted ] (m-1-1)
(m-2-2) edge[blue](m-2-4) 
(m-2-4) edge[blue] (m-2-6)
(m-3-1) edge[blue] (m-3-3) 
(m-4-4) edge[black] (m-4-6)
(m-4-4) edge[black] (m-3-3)
(m-4-6) edge[black] (m-3-5)
(m-3-5) edge [-,line width=7pt,draw=white](m-1-5) (m-3-5) edge[ ] (m-1-5)
(m-3-3) edge [-,line width=7pt,draw=white](m-1-3) (m-3-3) edge[ ] (m-1-3)
(m-3-7) edge  [-,line width=7pt,draw=white] (m-1-7) (m-3-7) edge[ ] (m-1-7)
(m-2-8) edge (m-1-7)
(m-4-4) edge[black] (m-2-4)
(m-4-6) edge[black] (m-2-6)
;
\path
(m-4-8) edge[black,double,double distance=1.8pt]  (m-3-7)
(m-4-8) edge[black,double,double distance=1.8pt]  (m-2-8)
(m-3-7) edge[blue,double,double distance=1.5pt]  (m-2-8);
\path[dotted,right hook->]
(m-2-6) edge (m-1-5)
(m-2-4) edge (m-1-3)
(m-2-2) edge (m-1-3)
;
\path[right hook->]
(m-3-3) edge [-,line width=7pt,draw=white](m-3-5)
(m-3-3) edge[blue] (m-3-5)
;
\path[->>]
(m-3-5) edge [-,line width=7pt,draw=white](m-3-7)
(m-3-5) edge[blue] (m-3-7)
;
\end{tikzpicture}
\end{equation*}
\caption{Braid-like groups and automorphism groups of free groups. 
\label{eq:diagram}} 
\end{figure}

The set of all permutation-conjugacy automorphisms of $F_n$ 
forms a subgroup of $\Aut(F_n)$, denoted by $B\Sigma_n$. The subgroup
$P\Sigma_n = B\Sigma_n\cap \IA_n$ is generated by the 
Magnus automorphisms $\alpha_{ij}$ ($1\le i\ne j\le n$), 
while the subgroup $P\Sigma^+_n = P\Sigma_n\cap \IA^+_n$ 
is generated by the automorphisms $\alpha_{ij}$ with $i<j$. In \cite{McCool86}, 
McCool gave presentations for the groups $P\Sigma_n$ and $P\Sigma_n^+$;   
these groups are now also called the McCool groups and the 
upper McCool groups, respectively. 

The {\em welded braid groups}\/ were introduced by 
Fenn, Rim{\'a}nyi, and Rourke in \cite{Fenn-R-R97}, 
who showed that the welded braid group $\wB_n$ is 
isomorphic to $B\Sigma_n$.  These groups, together   
with the \emph{welded pure braid groups}\/ $\wP_n\cong P\Sigma_n$ 
and the \emph{upper welded pure braid groups}\/ $\wP^+_n\cong P\Sigma^+_n$ 
have generated quite a bit of interest since then, 
see for instance \cite{ABMW, Bar-Natan14, Bardakov-B14, 
Berceanu-Papadima09, Damiani16} and references therein.
The welded pure braid group $\wP_n$ can be identified 
with group of motions of $n$ unknotted, unlinked circles in the $3$-sphere. 
As shown by Brendle and Hatcher in \cite{Brendle-Hatcher13}, 
this group can be realized as the fundamental group of 
the space of configurations of parallel rings in $\R^3$.  

A related class of groups are the \emph{virtual braid groups}\/ $\vB_n$, 
which were  introduced by Kauffman in \cite{Kauffman99} in the context 
of virtual knot theory, see also \cite{Goussarov-P-V00}. 
The kernel of the canonical epimorphism $\vB_n \to S_n$ 
is called the \emph{virtual pure braid group} $\vP_n$. 
In \cite{Bardakov04}, Bardakov found a concise presentation 
for $\vP_n$, and defined accordingly the {\em upper virtual pure braid}\/ 
group $\vP_n^+$. Whether or not the virtual (pure) braid groups 
are subgroups of $\Aut(F_n)$ is an open question that goes 
back to \cite{Bardakov04}.

The groups $\vP_n$ and $\vP_n^+$ were also independently studied 
by Bartholdi, Enriquez, Etingof, and Rains \cite{Bartholdi-E-E-R} 
and P.~Lee \cite{Lee} as groups arising from  
the Yang-Baxter equations. Classifying spaces for these groups 
(also known as the quasi-triangular groups and the triangular groups, respectively) 
can be constructed by taking quotients of permutahedra by suitable 
actions of the symmetric groups.

The groups mentioned so far fit into the diagram from Figure \ref{eq:diagram} 
(a related diagram can be found in \cite{ABMW}).  We will discuss presentations 
for these groups, various extensions and homomorphisms between them, 
as well as their centers in \S\ref{sec:braids}. 

\subsection{Lie algebras, LCS ranks, and formality}
\label{subsec:Liealgebras} 
 
To any finitely generated group $G$, there corresponds a  
graded Lie algebra,  $\gr(G)$, obtained by taking the direct sum of the 
successive quotients of the lower central series of $G$, and tensoring 
with $\C$. The {\em LCS ranks}\/ of the group $G$ are defined as the 
dimensions, $\phi_k(G)=\dim \gr_k(G)$, of the graded pieces of this 
Lie algebra.  As explained in Theorem \ref{thm:lcs koszul}, the  
computation of these ranks is greatly simplified if the group $G$ satisfies 
certain formality properties, and its cohomology algebra is Koszul.  

The set of primitive elements of the completed group algebra $\widehat{\C{G}}$ 
is a complete, filtered Lie algebra over $\C$, called the \emph{Malcev Lie algebra}\/  
of $G$, and denoted by  $\fm(G)$. By a theorem of Quillen \cite{Quillen69}, 
there exists an isomorphism of graded Lie algebras between $\gr(G)$ and $\gr(\fm(G))$.

The group $G$ is said to be \emph{graded-formal}\/ if its associated graded 
Lie algebra, $\gr(G)$, admits a quadratic presentation.
The group $G$ is said to be \emph{filtered-formal}\/ if there exists an isomorphism 
of filtered Lie algebras between  $\fm(G)$ and the degree completion of 
$\gr(G)$. Furthermore, the group $G$ is called \emph{$1$-formal}\/ 
if it is graded-formal and filtered-formal, or, equivalently, if there is a 
$1$-quasi-isomorphism between the $1$-minimal model of $G$ 
and the cohomology algebra $H^*(G,\C)$ endowed with the zero differential.
We refer to \cite{SW1} for a comprehensive study of these 
formality notions for groups. 
 
A presentation for the Malcev Lie algebra of $P_n$ was given by Kohno 
in \cite{Kohno83}, while the associated graded Lie algebra $\gr(P_n)$ 
and its graded ranks were computed by Kohno \cite{Kohno84} 
and Falk--Randell \cite{Falk-Randell}. 
It was also realized around that time that the pure braid groups $P_n$ are $1$-formal. 
As shown by Berceanu and Papadima in \cite{Berceanu-Papadima09}, the 
Malcev Lie algebras of $\wP_n$ and $\wP_n^+$ admit quadratic presentations, 
that is, the groups $\wP_n$ and $\wP_n^+$ are $1$-formal.  
Furthermore, as shown in \cite{Bartholdi-E-E-R,Lee}, 
the groups $\vP_n$ and $\vP_n^+$ are graded-formal.
On the other hand, we show in \cite{SW3} that the virtual 
pure braid groups $\vP_n$ and 
$\vP_n^+$ are $1$-formal if and only if $n\leq 3$.  

A lot is also known about the residual properties of the pure braid-like 
groups, especially as they relate to the lower central series. 
For instance, a theorem of Berceanu and Papadima \cite{Berceanu-Papadima09},  
which uses work of Andreadakis \cite{Andreadakis65} and an idea 
of Hain \cite{Hain97}, shows that the groups $\IA_n$ are residually 
torsion-free nilpotent, for all $n$.  Thus, the groups $P_n$, $\wP_n$, 
and $\wP^+_n$ also enjoy this property.  The fact that the pure braid 
groups $P_n$ are residually torsion-free nilpotent also follows from  
the work of Falk and Randell \cite{Falk-Randell, Falk-Randell88}. 
It is also known that the virtual pure braid groups $\vP_n$ and 
$\vP_n^+$ are residually torsion-free nilpotent for $n\le 3$, but it is 
not known whether this is the case for $n\ge 4$.  

\def\arraystretch{1.5}%
\begin{center}
\begin{table}[t]
\caption{Hilbert series, Koszulness, and formality of pure braid-like groups. 
\label{table:introHilb}}
\begin{tabular}{|M{1cm}|M{3.9cm}M{0.4cm}|M{2cm}M{1cm}|M{2cm}M{0.4cm}|N}
\hline 
$G$ & Hilbert series $\Hilb(H^*(G;\C),t)$ 
& & \hfill Koszulness & &\hfill $1$-Formality   & &\\[4pt]
\hline 
$P_n$
&$\prod\limits_{j=1}^{n-1}(1+jt)$ & {\hfill\vfill \scriptsize \cite{Kohno84}} 
&Yes& {\scriptsize{\vspace*{10pt}\hspace*{-6pt}\cite{Arnold69,Kohno84,SY97}}} 
&Yes& {\hfill\vfill \scriptsize \cite{Kohno83}} &\\      
\hline    
$\wP_n$& $(1+nt)^{n-1}$& { \hfill\vfill  \scriptsize \cite{Jensen-McCammond-Meier06}}  
&No (for $n\geq 4$) & { \hfill\vfill  \scriptsize \:\:\qquad\cite{Conner-Goetz15}} 
&Yes & {\hfill\vfill  \scriptsize \cite{Berceanu-Papadima09}} &\\  [10pt]
\hline 
$\wP_n^+$ & $\prod\limits_{j=1}^{n-1}(1+jt)$ 
&{\hfill\vfill  \scriptsize\cite{Cohen-P-V-Wu}}
& Yes & { \hfill\vfill  \scriptsize \:\:\qquad \cite{Cohen-Pruidze08}}
& Yes & { \hfill\vfill  \scriptsize \cite{Berceanu-Papadima09}} &\\  
\hline
$\vP_n$&$\sum\limits_{i=0}^{n-1}\binom{n-1}{i} \dfrac{n!}{(n-i)!} t^i$
&{\ \vfill \hfill\vfill\hfill \scriptsize \cite{Bartholdi-E-E-R}} 
&Yes& \ \vfill\hfill \vfill\hfill {\scriptsize \hbox{\cite{Bartholdi-E-E-R,Lee}}} 
& No (for $n\geq 4$)&{ \hfill\vfill\hfill  \scriptsize \cite{SW2}} 
& \\ [16pt]
\hline
$\vP_n^+$&$\sum\limits_{j=1}^n\dfrac{\left(\sum\limits_{i=0}^{j-1}(-1)^i\binom{j}{i}(k-i)^n\right)t^{n-j}}
 {j!}$
&{ \hfill\vfill \hfill\vfill\hfill \scriptsize \cite{Bartholdi-E-E-R}} 
&Yes& { \ \vfill\hfill\vfill \hfill \scriptsize \cite{Bartholdi-E-E-R,Lee}} 
& No (for $n\geq 4$)&{\, \hfill\vfill\hfill   \scriptsize \cite{SW2}} 
 &\\
\hline
\end{tabular}
\end{table}
 \end{center}
\def\arraystretch{1}%

\subsection{Resonance varieties and Chen ranks} 
\label{subsec:Resonance} 

We conclude our survey with a discussion of the cohomology algebras of the 
pure braid-like groups, and of two other related objects: the resonance 
varieties attached to these graded algebras, and the Chen ranks 
associated to the groups themselves.  

The cohomology algebra of the classical pure braid 
group $P_n$ was computed by Arnol{\cprime}d in his seminal paper 
on the subject, \cite{Arnold69}.  An explicit presentation for 
the cohomology algebra of the McCool group $\wP_n$ 
was given by Jensen, McCammond 
and Meier \cite{Jensen-McCammond-Meier06}, thereby 
confirming a conjecture of A.~Brownstein and R.~Lee.  Using 
different methods, 
F.~Cohen, Pakianathan, Vershinin, and Wu \cite{Cohen-P-V-Wu} 
determined the cohomology algebra of the upper McCool group $\wP^+_n$. 
Finally, the cohomology algebras of the virtual pure braid groups 
$\vP_n$ and $\vP_n^+$ were computed by
Bartholdi et al. \cite{Bartholdi-E-E-R} and Lee \cite{Lee}.

For all these groups $G$, the cohomology algebra $A=H^*(G,\C)$ is 
quadratic, i.e., it is generated in degree $1$ and the ideal of relations 
is generated in degree $2$. In fact, for all but the groups $\wP_n$, $n\ge 4$, 
the ideal of relations admits a quadratic Gr\"{o}bner basis, and so the 
algebra $A$ is Koszul.  For more details and references regarding this topic, 
we direct the reader to Table \ref{table:introHilb} and to \S\ref{subsec:cohomology}. 

Given a group $G$ satisfying appropriate finiteness conditions, 
the resonance varieties $\RR^i_s(G)$ are certain closed, homogeneous 
subvarieties of the affine space $A^1=H^1(G;\C)$, defined by means of 
the vanishing cup products in the cohomology 
algebra $A=H^*(G,\C)$.  We restrict our attention here 
to the first resonance variety $\RR_1(G)=\RR^1_1(G)$ attached to a 
finitely generated group $G$.  This variety consists of all elements 
$a\in A^1$ for which there exists an element $b\in A^1$ 
such that $a\cup b=0$, yet $b$ is not proportional to $a$.

The aforementioned computations of the cohomology algebras 
of the various pure braid-like groups allows one to determine 
the corresponding resonance varieties, at least in principle.  
In the case of the first resonance varieties of the groups 
$P_n$, $\wP_n$, and $\wP_n^+$, complete answers 
can be found in \cite{Cohen-Suciu99}, \cite{CohenD09}, 
and \cite{SW3}, respectively, while for the virtual pure braid 
groups, partial answers are given in \cite{SW2}.  
We list some of the features of these varieties in Table \ref{table:resonance}. 

\def\arraystretch{1.5}%
\begin{center}
\begin{table}[t]
\caption{Resonance and Chen ranks of braid-like groups. 
\label{table:resonance}}
\begin{tabular}{|M{1cm}|M{3cm}M{0.8cm}|M{3cm}M{0.5cm}|M{1cm}M{0.5cm}|N}
\hline 
$G$ 
&  First resonance variety $\RR_1(G)\subseteq H^1(G;\C)$&
& \hfill Chen ranks $\theta_k(G)$, $k\geq 3$ &
& Resonance--\hbox{Chen ranks} formula & &\\[4pt]
\hline 
$P_n$
&$\binom{n}{3}+\binom{n}{4}$ planes    &{ \hfill\vfill  \scriptsize  \cite{Cohen-Suciu99}}
&
$(k-1)\binom{n+1}{4}$& { \hfill\vfill  \scriptsize \cite{Cohen-Suciu95} }
& Yes &{ \hfill\vfill  \scriptsize  \cite{Cohen-Suciu99}} 
&\\   [5pt]
\hline    
$\wP_n$ &$\binom{n}{2}$  planes and $\binom{n}{4}$ linear spaces of dimension $3$  
&{ \hfill\vfill  \scriptsize\cite{CohenD09} }
&$(k-1)\binom{n}{2}+(k^2-1)\binom{n}{3}$  for $k\gg 3$
&{ \hfill\vfill  \scriptsize\cite{Cohen-Schenck15}}
& Yes  
&{ \hfill\vfill  \scriptsize\cite{Cohen-Schenck15}}
&\\  [13pt]
\hline 
$\wP_n^+$ & $(n-i)$ linear spaces of dimension $i\geq 2$
&{ \hfill\vfill  \scriptsize\cite{SW3} }
&$\sum\limits_{i=3}^k\binom{n+i-2}{i+1}+ \binom{n+1}{4}$
&{ \hfill\vfill  \scriptsize\cite{SW3}}
&No   &{ \hfill\vfill  \scriptsize\cite{SW3}}
&\\    [13pt]
\hline
$\vP_3$&$H^1(\vP_3,\C)=\C^6$
&{ \hfill\vfill  \scriptsize\cite{BMVW, SW2}}
&$\binom{k + 3}{5} + \binom{k + 2}{4} + \binom{k+1}{3} + 
 6\binom{k}{2}+ k - 2$
 &{ \hfill\vfill  \scriptsize\cite{SW2}}
 &No  
 &{ \hfill\vfill  \scriptsize\cite{SW2}}
 & \\ [18pt]
\hline
$\vP_4^+$
& $3$-dimensional non-linear subvariety of degree 6
 &{ \hfill\vfill  \scriptsize\cite{SW2}}
& $(k^3-1)+\binom{k}{2}$
&{ \hfill\vfill  \scriptsize\cite{SW2}}
&No &{ \hfill\vfill  \scriptsize\cite{SW2}}
&\\  [13pt]
\hline
\end{tabular}
\end{table}
\end{center}
\def\arraystretch{1}%

By comparing the resonance varieties of the groups $P_n$ and $\wP_n^+$, 
it can be shown that these groups are not isomorphic for 
$n\geq 4$ (cf.~\cite{SW3}); this answers a question 
of F.~Cohen et al.~\cite{Cohen-P-V-Wu}, see Remark \ref{rem:bm}.  
By computing the resonance variety 
$\RR_1(\vP_4^+)$, and using the Tangent Cone Theorem from \cite{DPS09},
we prove that the group $vP_4^+$ is not $1$-formal. In view of the retraction 
property for $1$-formality established in \cite{SW1}, we conclude that 
the groups $\vP_n$ and $\vP_n^+$ are not $1$-formal for $n\geq 4$.

The {\em Chen ranks}\/  of a finitely generated group 
$G$ are the dimensions, $\theta_k(G)=\dim \gr_k (G/G'')$,  
of the graded pieces of the graded Lie algebra associated 
to the maximal metabelian quotient of $G$.  In \cite{Chen51}, 
K.-T. Chen computed  the Chen ranks of the free groups $F_n$, 
while in \cite{Massey}, W.S. Massey gave  an alternative method for computing 
the Chen ranks of a group $G$ in terms of the Alexander invariant  $G'/G''$. 

The Chen ranks of the pure braid groups $P_n$ were computed in \cite{Cohen-Suciu95}, 
while an explicit relation between the Chen ranks and the resonance 
varieties of an arrangement group was conjectured in \cite{Suciu01}. 
Building on work from \cite{CS99, SS02, SS06} and especially \cite{PS04},  
Cohen and Schenck confirmed this conjecture in \cite{Cohen-Schenck15} 
for a class of $1$-formal groups which includes arrangement groups.  
In the process, they also computed the Chen ranks $\theta_k(\wP_n)$ 
for $k$ sufficiently large. 

Using the Gr\"{o}bner basis algorithm from \cite{Cohen-Suciu95, CS99}, 
we compute in \cite{SW3} all the Chen ranks of the upper McCool groups $\wP^+_n$.   
This computation, recorded here in Theorem \ref{thm:ChenRanksIntro}, 
shows that, for each $n\ge 4$, the group $\wP_n^+$ is 
not isomorphic to either the pure braid group $P_n$, or to the product 
$\prod_{i=1}^{n-1}F_{i}$, although these three groups share the same 
LCS ranks and the same Betti numbers.  We also provide the Chen ranks 
of the groups $\vP_n$ and $\vP_n^+$ for low values of $n$.   
The full computation of the Chen ranks of the virtual pure braid groups 
remains to be done.
 
\section{Braid groups and their relatives}
\label{sec:braids}

\subsection{Braid groups and pure braid groups}
\label{subsec:presentation}

Let $\Aut(F_n)$ be the group of (right) automorphisms of the free group $F_n$ 
on generators $x_1,\dots, x_n$. Recall that the Artin braid group $B_n$ 
consists of those permutation-conjugacy automorphisms 
which fix the word $x_1\cdots x_n\in F_n$. In particular, 
$B_1=\{1\}$ and $B_2=\Z$.  The natural inclusion 
$\alpha_n \colon B_n\inj \Aut(F_n)$ is also known 
as the Artin representation of the braid group. 

For each $1\le i <n$, let $\sigma_i$ be the braid automorphism 
which sends $x_i$ to $x_ix_{i+1}x_i^{-1}$ and $x_{i+1}$ to $x_i$, while  
leaving the other generators of $F_n$ fixed.  As shown for instance 
in \cite{Birman74},  the braid group $B_n$ is generated by the elementary braids 
$\sigma_1,\dots ,\sigma_{n-1}$, subject to the well-known relations 
\begin{equation}
\label{eq:braids}
\tag{R1}
\left\{
\begin{aligned}
&\sigma_i  \sigma_{i+1}  \sigma_i = \sigma_{i+1}  \sigma_i  \sigma_{i+1},
&1\le i \le n-2, 
\\
&\sigma_i  \sigma_j = \sigma_j  \sigma_i,&\abs{i-j}\geq 2. 
\\
\end{aligned}\right. 
\end{equation}

On the other hand, the symmetric group $S_n$ has a presentation with 
generators $s_i$ for $1\leq i\leq n-1$ and relations
\begin{equation}
\label{eq:symmetric}
\tag{R2}
\left\{
\begin{aligned}
&\s_i  \s_{i+1}  \s_i = \s_{i+1}  \s_i  \s_{i+1},
&1\le i \le n-2, 
\\
&\s_i  \s_j = \s_j  \s_i,&\abs{i-j}\geq 2. 
\\
&\s_i^2 =1,&
1\le i \le n-1;
\\
\end{aligned}\right.
\end{equation}

The canonical projection from the braid group to the symmetric 
group, which sends the elementary braid $\sigma_i$ to the 
transposition $s_{i}$, has kernel the {\em pure braid group}\/ 
on $n$ strings, 
\begin{equation}
\label{eq:pnbn}
P_n = \ker (\phi\colon B_n \surj S_n ) = B_n\cap \IA_n,
\end{equation}
where $\phi(\sigma_i)=s_i$ for $1\leq i\leq n-1$.
The group $P_n$ is generated by the $n$-stranded braids 
\begin{equation}
\label{eq:aij}
A_{ij}=\sigma_{j-1}\sigma_{j-2}\cdots\sigma_{i+1}\sigma_i^2
\sigma_{i+1}^{-1}\cdots \sigma_{j-2}^{-1}\sigma_{j-1}^{-1},
\end{equation}
for $1\leq i<j\leq n$. It is readily seen that 
$P_1=\{1\}$, $P_2= \Z$, and $P_3\cong F_2\times \Z$.  
More generally,  as shown by Fadell and Neuwirth \cite{Fadell-Neuwirth} 
(see also \cite{Falk-Randell, Falk-Randell88,Cohen-Suciu98}),
 the pure braid group $P_n$ can be decomposed
as an iterated semi-direct product of free groups,
\begin{equation}
\label{eq:pn semi}
P_n=F_{n-1} \rtimes_{\alpha_{n-1}} P_{n-1}=
F_{n-1} \rtimes  F_{n-2} \rtimes \cdots \rtimes F_1,
\end{equation}
where $\alpha_{n-1}\colon P_{n-1}\inj \Aut(F_{n})$ is the restriction of the 
Artin representation of the braid group $B_{n-1}$ to the pure 
braid subgroup $P_{n-1}$.

Work of Chow \cite{Chow} and Birman \cite{Birman74} shows that the 
center $Z(P_n)$ of the pure braid group on $n\ge 2$ strands is infinite cyclic,  
generated by the full twist braid $\prod_{1\leq i<j\leq n}A_{ij}$.  It follows that 
$P_n\cong \overline{P}_n\times \Z$, where  $\overline{P}_n = P_n/Z(P_n)$.

\subsection{Welded braid groups}
\label{subsec:welded}

The set of all permutation-conjugacy automorphisms of the free group 
of rank $n$ forms the braid-permutation group $\wB_n$.  This group 
has a presentation with generators $\sigma_i$ and $\s_i$ ($1\le i< n$) 
and relations \eqref{eq:braids} and \eqref{eq:symmetric}, as well as 
\begin{equation}
\label{eq:welded1}
\tag{R3}
\left\{
\begin{aligned}
&\s_{i}  \sigma_{j} = \sigma_{j}  \s_{i}, &|i - j| \geq 2, \\
&\sigma_{i}  \s_{i+1} \s_i=\s_{i+1} \s_{i}  \sigma_{i+1},
&1\le i \le n-2, 
\end{aligned}\right.
\end{equation}
and 
\begin{equation}
\label{eq:welded2}
\tag{R4}
\s_{i}  \sigma_{i+1}  \sigma_{i} = \sigma_{i+1}  \sigma_{i} \s_{i+1},  \quad 1\le i \le n-2.
\end{equation}

\begin{figure}%[t]
\centering
\begin{tikzpicture}[scale=0.8]
\braid [
 style all floors={fill=white},
 line width=1pt,
](braid) at (1,0)  s_1^-1 ;
\braid [
 style all floors={fill=white},
 line width=1pt,
](braid) at (5,0)  s_1 ;
\braid [
 style all floors={fill=white},
 line width=1pt,
](braid) at (5,0)  s_1^-1 ;
\fill[black] (braid) circle (4pt);
\braid [
 style all floors={fill=white},
 line width=1pt,
](braid) at (9,0)  s_1 ;
\braid [
 style all floors={fill=white},
 line width=1pt,
](braid) at (9,0)  s_1^-1 ;
\draw (9.5,-0.75) circle (0.14cm);
\draw (1.5, 0.3) node {classical};
\draw (5.5, 0.3) node {welded};
\draw (9.5, 0.3) node {virtual};
\end{tikzpicture}
\caption{Braid crossings. 
\label{fig:braids}}
\end{figure} 

The three types of braid crossings mentioned above are depicted in Figure \ref{fig:braids}.

The welded pure braid group $\wP_n$, also known as the group of 
basis-conjugating automorphisms  in \cite{Bar-Natan14, Fenn-R-R97, CohenD09}, 
or the McCool group in \cite{Berceanu-Papadima09},  is defined as 
\begin{equation}
\label{eq:wpndef}
\wP_n = \ker (\rho\colon \wB_n \surj S_n ) = \wB_n\cap \IA_n,
\end{equation}  
where $\rho(\sigma_i)=\rho(\s_i)=s_i$ for $1\leq i\leq n-1$.
As shown by McCool in \cite{McCool86}, 
this group is generated by the Magnus automorphisms 
$\alpha_{ij}$,  for all $1\le i\ne j\le n$, subject to the relations
\begin{align*}
&\alpha_{ij}\alpha_{ik}\alpha_{jk}=\alpha_{jk}\alpha_{ik}\alpha_{ij}, 
&\textrm{ for }  i,j,k  \textrm{ distinct},   \\
& [\alpha_{ij},\alpha_{st}]=1, &\textrm{ if }  \{i,j\}\cap \{s,t\}=\emptyset,    \\
&[\alpha_{ik},\alpha_{jk}]=1, &\textrm{ for }  i,j,k  \textrm{ distinct}.  
\end{align*}
In particular, $\wP_1=\{1\}$ and $\wP_2= F_2$.

Consider now the upper welded pure braid group (or, the upper McCool group) 
$\wP^+_n = \wP_n\cap \IA^+_n$.  This is the subgroup of $\wP_n$ generated 
by all the automorphisms $\alpha_{ij}$ with $i< j$. 
It readily seen that $\wP_1^+=\{1\}$, $\wP_2^+= \Z$,  and 
$\wP_3^+\cong F_2\times \Z$.
Furthermore, as shown by F.~Cohen et al.~\cite{Cohen-P-V-Wu}, 
the  group $\wP_n^+$ can be decomposed
as an iterated semi-direct product of free groups,
\begin{equation}
\label{eq:wpnplus semi}
\wP_n^+=F_{n-1} \rtimes_{\alpha^+_{n-1}} \wP^+_{n-1}=
F_{n-1} \rtimes  F_{n-2} \rtimes \cdots \rtimes F_1,
\end{equation}
where 
$\alpha^+_{n-1}\colon \wP_{n-1}^+\inj \Aut(F_{n-1})$ is the restriction of the 
Artin representation of $B_{n-1}$ to $ \wP_{n-1}^+$.

It follows from the previous discussion that $P_n\cong \wP_n^+$ for $n\le 3$.  
In view of this fact, a natural question (asked by F.~Cohen et al.~in \cite{Cohen-P-V-Wu}) 
is whether the groups $P_n$ and $\wP_n^+$ are isomorphic 
for $n\ge 4$.  A negative answer will be given in Corollary \ref{cor:NotIso}.
In the same circle of ideas, let us also mention the following result from \cite{SW3}. 

\begin{prop}[\cite{SW3}]
\label{prop:nonsplit}
For each $n\ge 4$, the inclusion map $\wP_n^+\inj \wP_n$ 
is not a split monomorphism. 
\end{prop}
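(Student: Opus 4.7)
The plan is to obstruct the existence of a group-theoretic retraction $r : \wP_n \surj \wP_n^+$ (that is, a homomorphism with $r \circ i = \id_{\wP_n^+}$) by tracking the induced graded-algebra section on cohomology and comparing it with the geometry of the first resonance variety.

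First, I would assume for contradiction that such an $r$ exists. Passing to cohomology, $r^* : H^*(\wP_n^+;\C) \hookrightarrow H^*(\wP_n;\C)$ is a section of $i^*$ as graded $\C$-algebras, and in particular $r^* : H^1(\wP_n^+;\C) \hookrightarrow H^1(\wP_n;\C)$ is a linear injection. Since the first resonance variety is defined in terms of vanishing cup products, it is functorial under graded algebra embeddings; hence $r^*(\RR_1(\wP_n^+)) \subseteq \RR_1(\wP_n)$. Because both groups are $1$-formal (see \S\ref{subsec:Liealgebras}), every irreducible component of either resonance variety is a linear subspace of the ambient first cohomology. Consequently $r^*$ carries each irreducible component of $\RR_1(\wP_n^+)$ isomorphically onto a linear subspace of the same dimension contained in some irreducible component of $\RR_1(\wP_n)$.

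For $n \geq 5$, this yields an immediate contradiction from Table \ref{table:resonance}: the variety $\RR_1(\wP_n^+)$ has an irreducible component of dimension $n-1 \geq 4$, namely the top-dimensional one corresponding to $i = n-1$ in the description ``$(n-i)$ linear spaces of dimension $i$'', whereas every irreducible component of $\RR_1(\wP_n)$ has dimension at most $3$. Since a linear injection preserves dimension, no component of $\RR_1(\wP_n)$ can accommodate the image of this large component of $\RR_1(\wP_n^+)$, ruling out the retraction.

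For $n = 4$ the dimension count fails, as both $\RR_1(\wP_4^+)$ and $\RR_1(\wP_4)$ have maximum component dimension $3$; this case is the principal obstacle. Here $\RR_1(\wP_4^+)$ consists of one $3$-dimensional component $M^+$ and two $2$-dimensional components $L_1, L_2$, while $\RR_1(\wP_4)$ has a unique $3$-dimensional component $M$ and six planes. By the analysis above, $r^*$ must carry $M^+$ isomorphically onto $M$, and each $r^*(L_i)$ must coincide with one of the six planes of $\RR_1(\wP_4)$: it cannot lie inside $M$, since otherwise $L_i$ would be a proper subvariety of $M^+$, contradicting the maximality of $L_i$ as an irreducible component. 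The hard part will be to verify, using the explicit quadratic presentations of the cohomology algebras from \cite{Jensen-McCammond-Meier06} and \cite{Cohen-P-V-Wu}, that no assignment of $L_1, L_2$ to planes of $\RR_1(\wP_4)$ is compatible simultaneously with the intersections $M^+ \cap L_i$, $L_1 \cap L_2$, and with all the cup-product relations in $H^*(\wP_4^+;\C)$ when pulled back through $r^*$. Making this algebraic incompatibility explicit is the main technical difficulty in the argument.
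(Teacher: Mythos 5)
Your overall strategy -- obstructing a retraction $r\colon \wP_n\surj\wP_n^+$ by noting that $r^*$ would be a degree-one-injective algebra map carrying $\RR_1(\wP_n^+)$ into $\RR_1(\wP_n)$, and then comparing the linear components of the two varieties -- is exactly the route the paper indicates (``the contrasting nature of the resonance varieties of the two groups''), and your dimension count settles $n\ge 5$ completely and correctly: $\RR_1(\wP_n^+)$ has a component of dimension $n-1\ge 4$ (Theorem \ref{thm:ResonanceIntro}), while every component of $\RR_1(\wP_n)$ has dimension at most $3$ (Theorem \ref{the:res wpn}), and an irreducible linear image must land inside a single component.

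The gap is the case $n=4$, which you yourself flag as ``the main technical difficulty'' and then do not carry out; as written this is a proof only for $n\ge 5$, whereas the statement begins at $n=4$. Moreover, your setup for $n=4$ contains a counting error that would derail the intended case analysis: $\RR_1(\wP_4)$ has $\binom{4}{3}=4$ three-dimensional components $L_{ijk}$, not a unique one, so there are four candidate targets for $r^*(M^+)$, not one. To actually close the case $n=4$ you do not need the intersection-pattern bookkeeping you propose; the efficient obstruction is \emph{isotropicity}. Since $i^*\circ r^*=\id$, the map $r^*$ is injective in degree $2$ as well, so the rank of the cup-product pairing $\bigwedge^2 V\to H^2$ is preserved when a subspace $V\subseteq H^1(\wP_4^+;\C)$ is replaced by $r^*(V)$. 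Each $3$-dimensional component $L_{ijk}$ of $\RR_1(\wP_4)$ is isotropic (this is verified in \cite{Cohen-Schenck15}, where it is needed for the Chen ranks formula), whereas the $3$-dimensional component of $\RR_1(\wP_4^+)$ is not (the failure of isotropicity for $\RR_1(\wP_n^+)$ is precisely why the Chen ranks formula fails for these groups, cf.\ \S\ref{subsec:Chen}); since $r^*$ must carry the latter onto one of the former, this is a contradiction. Until an argument of this kind is supplied, the proposal does not prove the proposition for $n=4$.
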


The proof of this proposition is based upon the contrasting nature of the 
resonance varieties of the two groups. We will come back to this 
point in \S\ref{sec:coho-resonance}. 

Cohen and Pruidze showed in \cite{Cohen-Pruidze08} that
the center of the group $\wP_n^+$  ($n\ge 2$)  is infinite cyclic, 
generated by the automorphism $\prod_{1\leq j\leq n-1}\alpha_{j,n}$.  
On the other hand, Dies and Nicas showed in \cite{Dies-Nicas14} that 
the center of the group $\wP_n$ is trivial for $n\geq 2$.

\subsection{Virtual braid groups}
\label{subsec:virtual}

Closely related are the virtual braid groups $\vB_n$, the virtual pure braid groups 
$\vP_n$, and their upper triangular subgroups, $\vP_n^+$, obtained 
by omitting certain commutation relations from the respective McCool groups. 
The group $\vB_n$ admits a presentation with generators
$\sigma_i$ and $\s_i$ for $i = 1,  \dots, n-1$, 
subject to the relations \eqref{eq:braids}, \eqref{eq:symmetric}, 
and \eqref{eq:welded1}.
The virtual pure braid group $\vP_n$ is defined as the kernel of the canonical 
epimorphism $\psi\colon \vB_n\to S_n$ given by $\psi(\sigma_i)=\psi(s_i)=s_i$ 
for $1\leq i\leq n-1$, see \cite{Bardakov04}. 

A finite presentation for $\vP_n$ was given by Bardakov \cite{Bardakov04}.
The virtual pure braid group $\vP_n$ and its `upper' subgroup, $\vP_n^+$, 
were both studied in depth (under different names) by Bartholdi et al.~and Lee 
in \cite{Bartholdi-E-E-R, Lee}.  These groups are  generated by elements 
$x_{ij}$ for $i\neq j$ (respectively, for $i<j$), subject to the relations
\begin{align*}
&x_{ij}x_{ik}x_{jk}=x_{jk}x_{ik}x_{ij}, 
&\textrm{ for }  i,j,k  \textrm{ distinct}, \\
& [x_{ij},x_{st}]=1, &\textrm{ if }  \{i,j\}\cap \{s,t\}=\emptyset.
\end{align*}

Unlike the inclusion map $\wP_n^+\inj \wP_n$  from Proposition \ref{prop:nonsplit}, 
the inclusion $\vP_n^+\inj \vP_n$ {\em does}\/ admit a splitting, 
see \cite{Bartholdi-E-E-R, SW2}. 

\begin{prop}
\label{prop:groups}
There exist monomorphisms and epimorphisms making the following 
diagram commute.
\begin{equation*}
\label{eq:Lie-diagram}
\begin{tikzpicture}[baseline=(current  bounding  box.center)]
\matrix (m) [matrix of math nodes, 
row sep={3em,between origins}, 
column sep={4.7em,between origins}]{
B_n  &\vB_n    &\wB_n\\
P_n &\vP_n  &\wP_n \\};
\path[->>]
 (m-1-2) edge (m-1-3)
 (m-2-2) edge (m-2-3) 
;
\path[right hook->]
 (m-1-1) edge (m-1-2)
 (m-2-1) edge (m-2-2)
  (m-2-1) edge (m-1-1)
  (m-2-2) edge (m-1-2)
  (m-2-3) edge (m-1-3)
;
\node at (-0.8,0.7) {\small $\varphi_n$};
\node at (.8,0.7) {\small $\pi_n$};
\end{tikzpicture} 
\end{equation*}
Furthermore, the compositions of the horizontal homomorphisms are 
also injective.
\end{prop}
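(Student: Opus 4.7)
The plan is to build all the maps in the diagram via the presentations given in \S\ref{subsec:presentation}--\S\ref{subsec:virtual}, use compatibility with the projections to $S_n$ to descend to the pure subgroups, and then identify the composition $B_n\to\wB_n$ with the Artin representation to obtain the asserted injectivity.

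First I would define the horizontal maps. The assignment $\sigma_i\mapsto\sigma_i$, $\s_i\mapsto \s_i$ visibly extends to a surjection $\pi_n\colon \vB_n\surj \wB_n$, because the defining relations \eqref{eq:braids}, \eqref{eq:symmetric}, \eqref{eq:welded1} of $\vB_n$ all hold in $\wB_n$ (which satisfies in addition \eqref{eq:welded2}); in other words, $\wB_n$ is the quotient of $\vB_n$ by the extra relation \eqref{eq:welded2}. Likewise, $\sigma_i\mapsto \sigma_i$ extends to a homomorphism $\varphi_n\colon B_n\to\vB_n$, because the only defining relations of $B_n$ are the braid relations \eqref{eq:braids}, which are among the defining relations of $\vB_n$. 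Its injectivity will be derived below.

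Next, each of the three columns of the diagram forms a short exact sequence with quotient $S_n$, and on generators both $\varphi_n$ and $\pi_n$ are compatible with the three projections to $S_n$ (all of which send $\sigma_i$, and, where applicable, $\s_i$, to the transposition $s_i$). Hence $\varphi_n$ restricts to a map $P_n\to\vP_n$ and $\pi_n$ restricts to a map $\vP_n\to\wP_n$, and the resulting squares commute with the vertical inclusions. A short diagram chase (the snake lemma applied to the short exact sequences with $\pi_n$ in the middle and the identity on $S_n$ on the right) shows that the restriction $\vP_n\surj\wP_n$ is surjective.

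Finally, to establish the injectivity of the compositions, I would invoke the Fenn--Rim\'{a}nyi--Rourke identification $\wB_n\cong B\Sigma_n\subseteq \Aut(F_n)$. Unwinding the definitions, the composite
\begin{equation*}
B_n\xrightarrow{\varphi_n}\vB_n\xrightarrow{\pi_n}\wB_n\cong B\Sigma_n\inj \Aut(F_n)
\end{equation*}
sends each $\sigma_i$ to the braid automorphism of $F_n$ that swaps $x_i$ and $x_{i+1}$ (up to the usual conjugation), i.e., it coincides with the Artin representation $\alpha_n$, which is faithful. Hence $\pi_n\circ\varphi_n\colon B_n\to\wB_n$ is injective, and consequently so is $\varphi_n$ itself; restricting to pure subgroups yields injectivity of $P_n\to\vP_n$ and of the composite $P_n\to\wP_n$. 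The one point requiring care is the generator bookkeeping: one must verify that, under the Fenn--Rim\'{a}nyi--Rourke identification, the generator $\sigma_i\in\wB_n$ corresponds to the Artin automorphism and not to a conjugate or inverse, so that the displayed composition really is $\alpha_n$ on the nose.
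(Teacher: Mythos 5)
Your proof is correct, and its overall architecture matches the paper's: define $\varphi_n$ and $\pi_n$ on generators, check relations via the presentations, use compatibility with the three projections to $S_n$ to restrict to the pure subgroups and to get surjectivity of $\vP_n\surj\wP_n$, and obtain injectivity by landing in $\Aut(F_n)$. The one genuine difference is in how the injectivity statements are organized. The paper quotes two external results: the explicit embedding $P_n\inj\vP_n$ of Bartholdi--Enriquez--Etingof--Rains, recorded in \eqref{eq:eting}, and work of Bardakov for the injectivity of $P_n\inj\wP_n$. You instead prove a single injectivity statement --- that the composite $B_n\to\vB_n\to\wB_n\cong B\Sigma_n\subset\Aut(F_n)$ coincides with the faithful Artin representation --- and deduce the rest formally: injectivity of a composite forces injectivity of the first factor, so $\varphi_n$ and its restriction $P_n\to\vP_n$ come for free. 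This is more self-contained (given the Fenn--Rim\'{a}nyi--Rourke identification, which the paper already builds into its definition of $\wB_n$), at the cost of not exhibiting the image of $A_{ij}$ in $\vP_n$ explicitly, which the paper records and reuses elsewhere. Your flagged bookkeeping point is the right one to check, and it does check out: in the Fenn--Rim\'{a}nyi--Rourke presentation the generator $\sigma_i$ of $\wB_n$ is precisely the Artin automorphism sending $x_i$ to $x_ix_{i+1}x_i^{-1}$ and $x_{i+1}$ to $x_i$. (A small quibble: the ``snake lemma'' is stated for abelian categories, but the elementary diagram chase you describe works verbatim for groups, so nothing is lost.)
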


\begin{proof}
\smartqed
There are natural inclusions $\varphi_n\colon B_n\inj \vB_n$ and 
$\psi_n\colon B_n\inj \wB_n$ that send $\sigma_i$ to $\sigma_i$, 
as well as a canonical projection $\pi_n\colon \vB_n \surj \wB_n$, 
that matches the generators $\sigma_i$ and $s_i$ of the respective 
groups.  By construction, we have that $\pi_n\circ \varphi_n=\psi_n$.  

We claim that these homomorphisms restrict to homomorphisms between 
the respective pure-braid like groups.  Indeed, as shown Bartholdi et al.~in 
\cite{Bartholdi-E-E-R}, the homomorphism $\varphi_n$ restricts to a map 
$P_n\inj  \vP_n$, given by  
\begin{equation}
\label{eq:eting}
A_{ij}\mapsto x_{j-1,j}\dots x_{i+1,j}x_{i,j}x_{j,i}
(x_{j-1,j}\dots x_{i+1,j})^{-1}.
\end{equation}
Clearly, the projection $\pi_n$ restricts to a map $\vP_n \surj \wP_n$ 
that sends $x_{ij}$ to $\alpha_{ij}$.   Using these observations, 
together with work of Bardakov  \cite{Bardakov04}, we see that the 
homomorphism $\psi_n$ restricts to an injective map $P_n\inj \wP_n$.
\qed
\end{proof}

From the defining presentations, it is readily seen that 
$vP_2^+\cong \Z$ and $vP_2\cong F_2$, while 
$\vP_3^+ \cong \Z\ast \Z^2$.  Moreover, using a 
computation of Bardakov et al.~\cite{BMVW}, we show in \cite{SW2} 
that $\vP_3 \cong \overline{P}_4\ast \Z$.  
Consequently  $vP_2$, $vP_3$ and $vP_3^+$ have trivial centers. 

More generally,  Dies and Nicas showed in \cite{Dies-Nicas14} that the center
of the group $\vP_n$ is trivial for $n\geq 2$; 
furthermore, the center of $\vP_n^+$ is trivial for $n\geq 3$, 
with one possible exception (and no exception if Wilf's conjecture is true). 

\begin{rem}
\label{rem:bardakov}
In Lemma 6 from \cite{Bardakov04}, Bardakov states that the group $\vP_n$ splits 
as a semi-direct product of the form $V^*_n \rtimes \vP_{n-1}$, where  
$V^*_n$ is a free group.  This lemma would imply, via an easy induction 
argument, that $Z(\vP_n)=\{1\}$ for $n\ge 2$ and $Z(\vP^+_n)=\{1\}$ 
for $n\ge 3$. Unfortunately, there seems to be a problem 
with this lemma, according to the penultimate remark 
from \cite[\S{6}]{Godelle-Paris12}. 
\end{rem}

\subsection{Topological interpretations}
\label{subsec:top}

All the braid-like groups mentioned previously admit nice topological interpretations. 
For instance, 
the braid group $B_n$ can be realized as the mapping class group 
of the $2$-disk with $n$ marked points, $\Mod_{0,n}^1$, while the pure 
braid group $P_n$ can be viewed as the fundamental group of 
the configuration space of $n$ ordered points on the complex line, 
$\Conf_n(\C)=\{(z_1,\dots, z_n)\in \C^n \mid z_i\neq z_j \textrm{ for } i\neq j\}$, 
 see for instance \cite{Birman74}. 

\begin{figure}%[h]
\begin{tikzpicture}[scale=1]
\draw [->,blue,thick] (2.05,-1)
to [out=-30,in=180] (3,-1.3) to [out=0,in=-135] (4.1,-0.7);
\draw [fill=white,ultra thick,white] (3.6,-1.1) circle (0.17cm);
\draw (1.5,-1) [thick] circle (0.5cm);
\draw (3,-1) circle (0.6cm);
\draw [fill=white,ultra thick,white] (2.4,-1.22) circle (0.15cm);
\draw [-,blue,thick] (2.05,-1) to [out=-30,in=180] (3,-1.3);

\draw (6.5,-1) [thick] circle (0.5cm);
\draw (8,-1) circle (0.6cm);
\draw [fill=white,ultra thick,white] (7.4,-1.22) circle (0.12cm);
\draw [fill=white,ultra thick,white] (8.6,-1.12) circle (0.1cm);
\draw [->,blue,thick] (7.05,-1)
to [out=-30,in=180] (8,-1.3) to [out=0,in=-135] (9.1,-0.7);
\draw (2.2, 0.3) node {Classical move};
\draw (7.2, 0.3) node {Welded move};
\end{tikzpicture}
\caption{Untwisted flying rings. \label{fig:flyring}}
\end{figure}

The welded braid group $\wB_n$ is the fundamental group of 
the `untwisted ring space,' which consists of all configurations 
of $n$ parallel rings (i.e., unknotted circles) in $\R^3$, see Figure \ref{fig:flyring}. 
However, as shown in \cite{Brendle-Hatcher13}, 
this space is not aspherical. 
The  welded pure braid group $\wP_n$ can be viewed as 
the pure motion group of $n$ unknotted, unlinked circles 
in $\R^3$, cf.~Goldsmith \cite{Goldsmith81}. 
The group $\wP_n^+$ is the fundamental group of the subspace consisting of 
all configurations of circles of unequal diameters in the `untwisted ring 
space,' see  Brendle--Hatcher \cite{Brendle-Hatcher13} 
and Bellingeri--Bodin \cite{Bellingeri-Bodin15}.

\begin{figure}%[t]
\begin{tikzpicture}[scale=0.9]
\draw (2,0) -- (1, 1.732) -- (2,3.464) -- (4,3.464) -- (5,1.732) -- (4,0)-- (2,0);
\draw (2,-0.3) node {\scriptsize (1,3,2)};
\draw (1.6, 1.732)node {\scriptsize(1,2,3)};
\draw (2,3.7) node {\scriptsize(2,1,3)};
\draw (4,3.7) node {\scriptsize(2,3,1)};
\draw (4.4,1.732) node {\scriptsize(3,2,1)};
\draw (4,-0.3) node {\scriptsize(3,1,2)};
\draw (4,-0.7) node { };
\draw (0.5,2) node { };
\end{tikzpicture}
\hspace{0.7cm}
\includegraphics[scale=0.16]{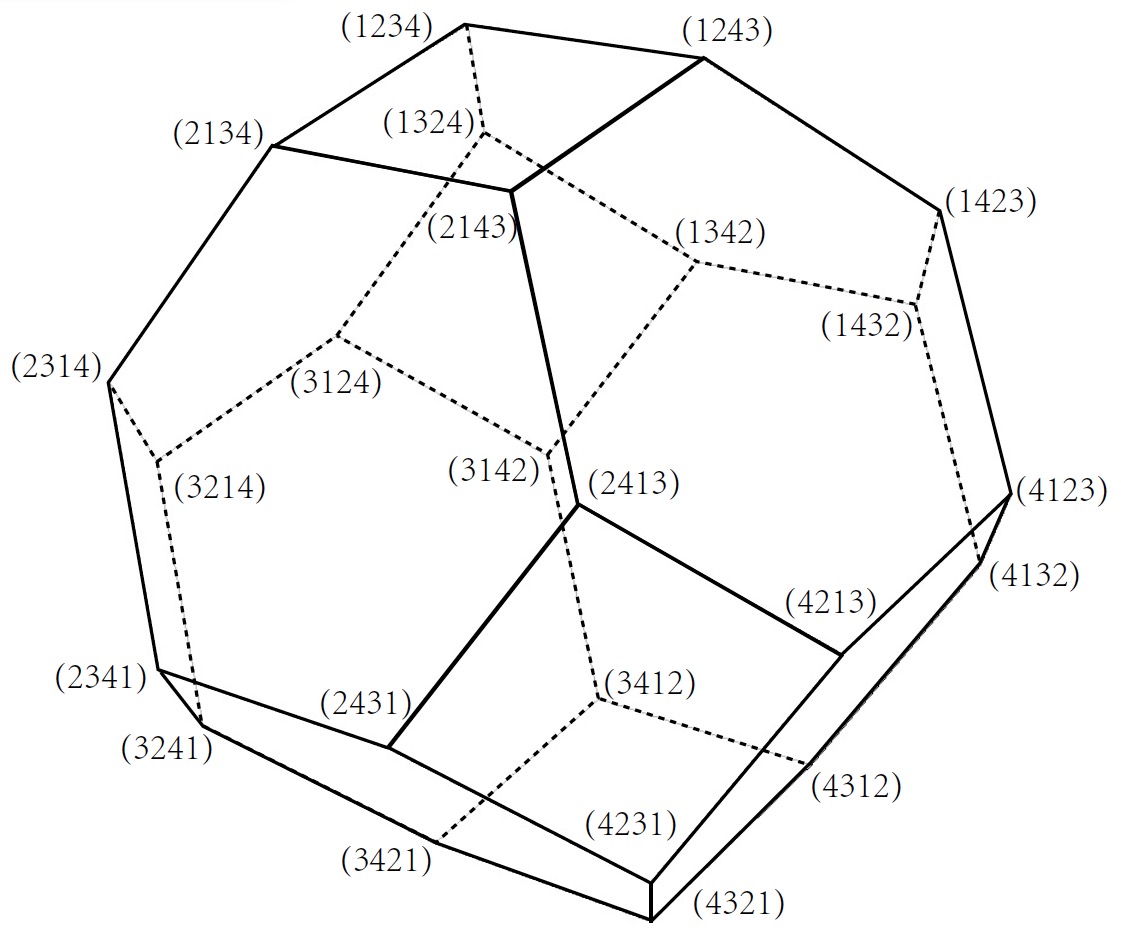}
\caption{$\mathbf{P}_3$ and $\mathbf{P}_4$. \label{fig:perm}}
\end{figure} 

A classifying space for the group $\vP_n^+$ is identified in \cite{Bartholdi-E-E-R} 
as the quotient space of the $(n-1)$-dimensional 
permutahedron $\mathbf{P}_n$ by actions of certain symmetric groups.
More precisely, let $\mathbf{P}_n$ be the convex hull of the orbit 
of a generic point in $\R^n$ under the permutation action of the symmetric 
group $S_n$ on its coordinates.  Then  $\mathbf{P}_n$ is a polytope  
whose faces are indexed by all ordered partitions of the set $[n]=\{1,\dots,n\}$; 
see Figure \ref{fig:perm}.
For each $r\in [n]$, there is a natural action of $S_r$ 
on the disjoint union of all $(n-r)$-dimensional faces of this polytope, 
$C_1\sqcup\dots\sqcup C_r$. 
Similarly, a classifying space for $\vP_n$ can be constructed as 
a quotient space of $\mathbf{P}_n\times S_n$. 
 
 \section{Cohomology algebras and resonance varieties}
\label{sec:coho-resonance}
 
 \subsection{Cohomology algebras}
\label{subsec:cohomology} 
 
Recall that the pure braid group $P_n$ is the fundamental group of the 
configuration space $\Conf_n(\C)$.
As shown by Arnol{\cprime}d in \cite{Arnold69}, 
the cohomology ring $H^*(P_n;\Z)$ is the 
skew-commutative ring generated by degree $1$ elements 
$u_{ij}$ ($1\leq i<j\leq n$), identified with the de Rham cocycles 
$\frac{dz_i-dz_j}{z_i-z_j}$. The cohomology algebras of the other pure braid-like 
groups were computed by several groups of researchers over 
the last ten years, 
see  \cite{Jensen-McCammond-Meier06,Cohen-P-V-Wu,Bartholdi-E-E-R,Lee}.
 
 We summarize these computations, as follows. To start with, 
 we denote the standard (degree $1$) generators of $H^*(\wP_n;\C)$ and 
 $H^*(\vP_n;\C)$  by $a_{ij}$ for $1\leq i\neq j\leq n$, and 
 we denote the generators of $H^*(\wP_n^+;\C)$ and $H^*(\vP_n^+;\C)$  by 
 $e_{ij}$ for $1\leq i< j\leq n$.
Next, we list several types of relations that occur in these algebras.
\begin{align*}
&u_{jk}u_{ik}=u_{ij}(u_{ik}-u_{jk}) &\textrm{ for $i<j<k$},
\label{eq:I1}\tag{I1}\\
&  a_{ij}a_{ji}=0 &\textrm{for }  i\neq j,
\label{eq:I2}\tag{I2}\\
& a_{kj}a_{ik}=a_{ij}(a_{ik}-a_{jk}) & \textrm{ for $i,j,k$ distinct},
\label{eq:I3}\tag{I3}\\
&a_{ji}a_{ik}=(a_{ij}-a_{ik})a_{jk}& \textrm{ for $i,j,k$ distinct},
\label{eq:I4}\tag{I4}\\
 & e_{ij}(e_{ik}-e_{jk})=0 & \textrm{for }  i< j< k,
\label{eq:I5}\tag{I5}\\
 &(e_{ij}-e_{ik})e_{jk}=0& \textrm{for }  i< j< k.
\label{eq:I6}\tag{I6}
\end{align*}

Finally, we record in
Table \ref{table:cohomology}
the cohomology algebras of the pure braid groups, the welded pure braid groups, 
the virtual pure braid groups, and their upper triangular subgroups.

\def\arraystretch{1.5}%
\begin{table}
\caption{Cohomology algebras of the pure braid-like groups. 
\label{table:cohomology}}
\begin{tabular}{|M{1.7cm}|M{1.7cm}|M{1.8cm}|M{1.8cm}|M{1.7cm}|M{1.8cm}|N}
\hline 
 &  $H^*(P_n;\C)$   ~~  {\scriptsize \cite{Arnold69}}& $H^*(wP_n;\C)$ ~ 
 {\scriptsize\cite{Jensen-McCammond-Meier06}} & $H^*(wP_n^+;\C)$ ~ 
 {\scriptsize\cite{Cohen-P-V-Wu}} &$H^*(vP_n;\C)$ 
 {\scriptsize\cite{Bartholdi-E-E-R,Lee}} &$H^*(vP_n^+;\C)$   
 {\scriptsize\cite{Bartholdi-E-E-R,Lee}} &\\[4pt]
\hline 
Generators & $u_{ij}$  \hbox{$1\leq i<j\leq n$} 
& $a_{ij}$ \hbox{$1\leq i\neq j\leq n$}\: 
& $e_{ij}$  \hbox{$1\leq i<j\leq n$}\:  
& $a_{ij}$ \hbox{$1\leq i\neq j\leq n$}\:  
& $e_{ij}$ \hbox{$1\leq i<j\leq n$}\: &\\[4pt]
 \hline
Relations &  \eqref{eq:I1}  
& \eqref{eq:I2} \eqref{eq:I3} 
&  \eqref{eq:I5}   
& \eqref{eq:I2} \eqref{eq:I3}  \eqref{eq:I4} 
&\eqref{eq:I5} \eqref{eq:I6} &\\[6pt]
 \hline
\end{tabular}
\end{table}
\def\arraystretch{1}%
  
The above presentations of the cohomology algebras differ slightly from 
those given in the original papers. Using these presentations, it is easily  
seen that the aforementioned homomorphisms, $f_n\colon vP_n\inj \wP_n$ and 
$g_n\colon vP_n^+\inj \wP_n^+$, induce epimorphisms in cohomology 
with $\C$-coefficients.

Let us also highlight the fact that the cohomology algebras of the pure braid-like groups 
are {\em quadratic algebras}.  More precisely, they are all algebras of the form 
$A=E/I$, where  $E$ is an exterior algebra generated in degree $1$, and 
$I$ is an ideal generated in degree $2$.  

It is also known that the  
cohomology algebras of all these groups (with the exception 
of $\wP_n$ for $n\geq 4$) are {\em Koszul algebras}.  That is 
to say, for each such algebra $A$, the ground field $\C$ admits 
a free, {\em linear}\/ resolution over $A$, or equivalently, $\Tor^A_i(\C,\C)_j=0$ 
for $i\ne j$.  In fact, it is known that in all these cases, 
the relations specified  in Table \ref{table:cohomology} 
form a quadratic Gr\"{o}bner basis for the ideal of relations $I$, a fact which  
implies Koszulness for the algebra $A=E/I$. On the other hand, 
it was recently shown by Conner and Goetz \cite{Conner-Goetz15} 
that the cohomology algebras of the groups $\wP_n$ are not Koszul 
for $n\ge 4$.  

For a summary of the above discussion, as well as detailed references 
for the various statements therein we refer to Table \ref{table:introHilb}.
 
\subsection{Resonance varieties}
\label{subsec:resonance} 

Now let $A=\bigoplus_{i\ge 0} A^i$ be a graded, graded-commutative 
algebra over $\C$.  We shall assume that $A$ is 
connected, i.e., $A^0=\C$, generated by the unit $1$. 
For each element $a\in A^1$, one can form a cochain complex, 
$(A,\delta_{a})$, known as the {\em Aomoto complex},
with differentials $\delta^i_{a}\colon A^i\to A^{i+1}$ given by 
$\delta^i_{a}(u)=a\cdot u$.  

The {\em resonance varieties}\/ of the graded algebra $A$ are the 
jump loci for the cohomology of the Aomoto complexes parametrized by 
the vector space $A^1$. More precisely, for each $i\ge 0$ and $s\ge 1$, 
the (degree $i$, depth $s$) resonance variety of $A$ is the set 
\begin{equation}
\label{eq:resvars}
\cR^i_s(A)=\{a\in A^1\mid \dim H^i(A,\delta_{a}) \geq s\}.
\end{equation}
If $A$ is locally finite (i.e., each graded piece $A^i$ is finite-dimensional),   
then all these sets are closed, homogeneous subvarieties of the affine space $A^1$. 

The resonance varieties of a finitely-generated group $G$ 
are defined as $\cR^i_s(G):=\cR^i_s(A)$, where $A=H^*(G,\C)$ 
is the cohomology algebra of the group.  If $G$ admits a classifying space  
with finite $k$-skeleton, for some $k\ge 1$, then the sets  $\cR^i_s(G)$ are 
algebraic subvarieties of the affine space $A^1$, for all $i\le k$, see 
\cite[Corollary 4.2]{PS14}.   We will focus here on the first resonance variety 
$\cR_1(G):=\cR_1^1(G)$, which can be described more succinctly as  
\begin{equation}
\label{eq:r1g}
\cR_1(G)=\{a\in A^1\mid \exists\, b\in A^1\setminus \C\cdot a 
\text{ such that }
ab=0\in A^2\}.
\end{equation}

The idea of studying a family of cochain complexes parametrized by the 
cohomology ring in degree $1$ originated from the theory of hyperplane 
arrangements, in the work of Falk  \cite{Falk97}, while the more general 
concept from \eqref{eq:r1g} first appeared in work of Matei and Suciu \cite{MS00}.  

The resonance varieties of a variety spaces and groups have been studied 
intensively from many perspectives and in varying degrees of generality, 
see for instance \cite{DPS09, Suciu12, PS14} and reference therein.   
In particular, the first resonance varieties of the groups  $P_n$, 
$\wP_n$, and $\wP^+_n$ have been completely determined in  
\cite{Cohen-Suciu99, CohenD09, SW3}.  We summarize those results as follows.

\begin{theorem}[\cite{Cohen-Suciu99}]
\label{the:res pbgroup}
For each $n\ge 3$, 
the first resonance variety of the pure braid group $P_n$
has decomposition into irreducible components given by
\begin{equation*}
\label{eq:r1pn}
 \cR_1(P_n)=    \bigcup_{1\leq i<j<k\leq n} L_{ijk}\cup 
 \bigcup_{1\leq i<j<k<l\leq n} L_{ijkl}, 
\end{equation*}
where 
\begin{itemize}
\item $L_{ijk}$ is the plane defined by the equations $\sum_{\{p,q\}\subset \{i,j,k\}}x_{pq}=0$ 
and   $x_{st}=0$ if $\{s,t\}\nsubseteq \{i,j,k\}$; 
\item $L_{ijkl}$ is the plane defined by the equations $\sum_{\{p,q\}\subset \{i,j,k,l\}}x_{pq}=0$,
$x_{ij}=x_{kl}$, $x_{jk}=x_{il}$, $x_{ik}=x_{jl}$   and 
$x_{st}=0$  if $\{s,t\}\nsubseteq \{i,j,k,l\}$. 
\end{itemize}
\end{theorem}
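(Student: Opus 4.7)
The plan is to exploit the Arnol$'$d presentation of the cohomology algebra $A=H^*(P_n;\C)$ and the fact that $P_n$ is the fundamental group of the complement of the braid arrangement $\mathcal{A}_n=\{H_{ij}\}_{1\leq i<j\leq n}$ in $\C^n$. In this way, the question reduces to computing the degree-one resonance of the Orlik--Solomon algebra of $\mathcal{A}_n$, which by the general theory (Falk, Libgober--Yuzvinsky, and the point of view developed in \cite{Cohen-Suciu99, DPS09, Suciu12}) decomposes into a union of linear components indexed by certain combinatorial substructures of the arrangement---namely, rank-two sub-arrangements which are either \emph{pencils of size at least $3$} or which carry a \emph{multinet} structure. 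The plan is therefore to first produce all such substructures of $\mathcal{A}_n$ and verify they give rise to the claimed planes, then argue that no other components can arise.

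For the inclusion $\supseteq$, I would verify directly, using the Arnol$'$d relation \eqref{eq:I1}, that every plane listed is contained in $\cR_1(P_n)$. For $L_{ijk}$, the three hyperplanes $H_{ij},H_{ik},H_{jk}$ meet along the codimension-two flat $\{z_i=z_j=z_k\}$, so they form a local pencil; a short calculation (of the type sketched below) shows that for any two linearly independent elements $a,b\in L_{ijk}$ one has $ab=0$ in $A^2$, and hence $L_{ijk}\subseteq \cR_1(P_n)$. Concretely, writing $a=\alpha u_{ij}+\beta u_{ik}+\gamma u_{jk}$ and $b=\alpha'u_{ij}+\beta'u_{ik}+\gamma'u_{jk}$ with $\alpha+\beta+\gamma=\alpha'+\beta'+\gamma'=0$, the Arnol$'$d relation collapses $ab$ to a combination of $u_{ij}u_{ik}$ and $u_{ij}u_{jk}$ whose coefficients vanish identically under these two linear constraints. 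For $L_{ijkl}$, the six hyperplanes indexed by pairs from $\{i,j,k,l\}$ carry the $(3,3)$-net structure given by the three perfect matchings of $K_4$ (i.e., $\{ij,kl\}$, $\{ik,jl\}$, $\{il,jk\}$); this is the prototype of a multinet, and the associated multinet pencil yields precisely the two-dimensional subspace $L_{ijkl}$, with $V\wedge V=0$ verifiable by a direct check using the Arnol$'$d relations restricted to this subarrangement.

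For the reverse inclusion $\subseteq$, I would invoke the classification theorem that every irreducible component of $\cR_1(A)$ for the Orlik--Solomon algebra of an arrangement is the image, under the restriction map, of a ``global'' component arising from a multinet on some rank-two subarrangement. Applied to $\mathcal{A}_n$, this reduces the problem to enumerating the multinets on $\mathcal{A}_n$. The combinatorial core of the argument is that the rank-two flats of $\mathcal{A}_n$ are of two types: those of multiplicity $3$ (coming from a triple of indices $\{i,j,k\}$ and giving a pencil of three hyperplanes, hence a trivial $(3,1)$-multinet) and those of multiplicity $2$ (ordinary double points, which contribute nothing). A separate combinatorial check, using the graph-theoretic description of $\mathcal{A}_n$ in terms of $K_n$, shows that the only nontrivial multinet supported on a proper sub-arrangement of $\mathcal{A}_n$ is the $(3,3)$-net on the six edges of a $K_4$-subgraph, with no larger or more intricate multinets surviving because any attempt to partition five or more edges of $K_n$ into a balanced multinet is obstructed by the Arnol$'$d relations. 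These two families produce exactly the $L_{ijk}$ and $L_{ijkl}$, respectively.

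The main obstacle is the classification step just described: showing that no further multinets exist on subarrangements of $\mathcal{A}_n$, and that the global components on the $K_4$-subarrangements do not glue into larger components of $\cR_1(P_n)$. One way to handle this cleanly is to pass to the truncated Aomoto complex $(A^0\to A^1\to A^2, \delta_a)$ and compute $\operatorname{rank}\delta_a^1$ as a function on $A^1$, using Arnol$'$d's monomial basis for $A^2$ together with a careful stratification of $A^1$; the jump locus is then cut out by the vanishing of certain minors whose combinatorics match those predicted by the two families above. Alternatively, one may appeal to the projection formula relating $\cR_1(P_n)$ to $\cR_1(P_4)$ under the fibered product structure of $\mathcal{A}_n$, thereby localizing the problem to the $n=3,4$ cases (which are checked by direct calculation) and propagating the answer by combinatorial restriction.
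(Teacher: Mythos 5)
First, a caveat: the survey states this theorem with a citation to \cite{Cohen-Suciu99} and gives no proof of its own, so there is no internal argument to compare yours against line by line. Your route is the modern one: identify $\Conf_n(\C)$ with the complement of the braid arrangement and classify the components of $\cR_1$ of the Orlik--Solomon algebra via pencils and multinets on sub-arrangements (Falk--Yuzvinsky). The original argument of Cohen and Suciu predates that machinery; it combines Falk's combinatorial description of resonance \cite{Falk97} with explicit computations for the braid arrangement, the components for general $n$ being pulled back from $\cR_1(P_3)$ and $\cR_1(P_4)$ along the strand-forgetting maps. Your inclusion $\supseteq$ is fine: the verification that $L_{ijk}$ is isotropic under \eqref{eq:I1} is exactly as you describe, and $L_{ijkl}$ is the classical non-local component supported on the $K_4$ sub-arrangement. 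Two corrections there: the structure carried by the three perfect matchings of $K_4$ is a $(3,2)$-net (three classes of two hyperplanes each), not a $(3,3)$-net; and the sub-arrangement supporting it has rank $3$, not $2$ --- rank-two flats only produce the local components $L_{ijk}$, so your phrase ``multinet on some rank-two subarrangement'' conflates the two mechanisms.

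The genuine gap is in the reverse inclusion, which is where all the content lies. You correctly flag it as the main obstacle, but then only assert the two facts you need: (i) that the multinet classification applies to an arrangement of arbitrary rank (this is legitimate, but one should say why --- $\cR_1$ depends only on the rank-$\le 2$ part of the intersection lattice, so one may pass to a generic $3$-dimensional section and invoke the $\mathbb{P}^2$ theory); and (ii) that the only multinets supported on sub-arrangements of the braid arrangement are the pencils at triple points and the $(3,2)$-nets on $K_4$'s. For (ii), ``obstructed by the Arnol'd relations'' is not an argument: multinets are combinatorial objects, and the actual proof must run through the combinatorics, e.g.\ every point of the base locus must meet all $k\ge 3$ classes, and the rank-two flats of $\mathcal{A}_n$ have multiplicity at most $3$, which forces $k=3$, all multiplicities $1$, and (after a connectivity argument) support equal to a single $K_4$. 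That finite analysis is precisely the work the proof has to do, and it is missing. Your fallback (b) is also not valid as stated: the forgetful maps $P_n\to P_4$ give the containment $\supseteq$ by naturality of resonance under pullback, but they do not by themselves show that every component of $\cR_1(P_n)$ arises this way; that again requires the support/essential-component analysis. In short: right skeleton, correct lower bound, but the upper bound remains a claim rather than a proof.
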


\begin{theorem}[\cite{CohenD09}]
\label{the:res wpn} 
For each $n\ge 2$, 
the first resonance variety of the  group $wP_n$
has decomposition into irreducible components given by
\begin{equation*}
\label{eq:r1wpn}
 \cR_1(wP_n)=\bigcup_{1\leq i<j\leq n}L_{ij} \cup \bigcup_{1\leq i<j<k\leq n} L_{ijk}, 
\end{equation*}
 where $L_{ij}$ is the plane defined by the equations $x_{pq}=0$  if $\{p,q\}\neq \{i,j\}$ and 
 $L_{ijk}$ is the $3$-dimensional linear subspace defined by the equations 
 $x_{ji}+x_{ki}=x_{ij}+x_{kj}=x_{ik}+x_{jk}=0$  and $x_{st}=0$ if $\{s,t\}\nsubseteq \{i,j,k\}$.
\end{theorem}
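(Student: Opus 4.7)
The plan is to work directly with the Aomoto complex of $A = H^*(\wP_n;\C)$, using its presentation with generators $a_{ij}$ ($1\le i\ne j\le n$) and quadratic relations \eqref{eq:I2} and \eqref{eq:I3}. By the reformulation in \eqref{eq:r1g}, a point $a \in A^1$ lies in $\cR_1(\wP_n)$ if and only if the annihilator $\{b \in A^1 : ab = 0\}$ is not contained in $\C\cdot a$. The argument naturally splits into establishing the two inclusions.

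For the inclusion $L_{ij}\subseteq \cR_1(\wP_n)$, if $a = \lambda_{ij} a_{ij}+\lambda_{ji} a_{ji}\ne 0$, relation \eqref{eq:I2} combined with $a_{ij}^2 = a_{ji}^2 = 0$ gives $a\cdot a_{ij} = a\cdot a_{ji} = 0$, and at least one of $a_{ij}, a_{ji}$ is linearly independent of $a$. For $L_{ijk}\subseteq \cR_1(\wP_n)$, I would observe that the subalgebra $B\subseteq A$ generated by the six classes $a_{pq}$ with $\{p,q\}\subset\{i,j,k\}$ coincides (after relabeling) with $H^*(\wP_3;\C)$, so the claim reduces to the base case $\cR_1(\wP_3)$. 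In that case, I write a generic $b\in B^1$ subject to the three defining equations of $L_{ijk}$, reduce $ab$ to a monomial normal form in $B^2$ using \eqref{eq:I2} and \eqref{eq:I3}, and check that $ab = 0$ while $b$ is not a scalar multiple of $a$. The three balancing equations defining $L_{ijk}$ then appear as exactly the conditions forced by this reduction.

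For the reverse inclusion, I would fix a monomial basis of $A^2$ obtained from a Gr\"obner basis of the ideal generated by \eqref{eq:I2} and \eqref{eq:I3} under a convenient term order. Expanding $ab$ in this basis converts the condition $ab=0$ into a bilinear system in the unknowns $(\lambda_{pq},\mu_{pq})$, indexed by the degree-$2$ normal monomials. Introduce the \emph{support} $S(a)\subseteq[n]$, consisting of those indices $p$ for which $\lambda_{pq}\ne 0$ or $\lambda_{qp}\ne 0$ for some $q$, and analyze the equations according to $|S(a)|$. When $|S(a)|\le 3$, a direct case analysis identifies $a$ as a point of $L_{ij}$ or $L_{ijk}$, with the three defining equations of $L_{ijk}$ emerging as the precise constraints under which a non-proportional $b$ can exist.

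The main obstacle is the case $|S(a)|\ge 4$: one must show that $\mathrm{ann}(a) = \C\cdot a$, so no such $a$ lies in $\cR_1(\wP_n)$. The strategy is to pick four distinct indices in $S(a)$ and to exploit the three-index relations \eqref{eq:I3}: the equations arising from triples $\{p,q,r\}\subset S(a)$ propagate the coordinates of $b$ across the directed support graph of $a$, eventually forcing $b\in \C\cdot a$. Making this propagation argument precise amounts to a careful rank computation for the restricted cup-product pairing $A^1\otimes A^1\to A^2$ on the coordinate subspace indexed by $S(a)$, and this is where the bulk of the combinatorial work of \cite{CohenD09} is concentrated.
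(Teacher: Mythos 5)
First, a point of reference: this survey states Theorem~\ref{the:res wpn} as a quoted result and gives no proof of its own, so the only argument to compare against is the one in \cite{CohenD09} --- and your plan is essentially that one: compute directly in the Jensen--McCammond--Meier presentation of $H^*(\wP_n;\C)$ and analyze the annihilator of a degree-one class. The forward inclusions are in good shape. The computation showing that each nonzero $a\in L_{ij}$ is annihilated by the full two-dimensional span of $a_{ij},a_{ji}$ is correct, and the reduction of $L_{ijk}$ to the case $n=3$ works; however, the claim that the subalgebra generated by the six classes $a_{pq}$ with $\{p,q\}\subset\{i,j,k\}$ ``coincides with $H^*(\wP_3;\C)$'' needs justification, most cleanly via the retraction $\wP_n\to\wP_3$ that kills the generators $\alpha_{pq}$ with $\{p,q\}\nsubseteq\{i,j,k\}$: this exhibits $H^*(\wP_3;\C)$ as a split summand and guarantees that a product vanishing in $H^2(\wP_3;\C)$ still vanishes in $H^2(\wP_n;\C)$. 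What your $n=3$ check really establishes is that $L_{ijk}$ is isotropic for the cup product --- true, and the same fact that Cohen--Schenck need as input for \eqref{eq:chenranksmccool} in \cite{Cohen-Schenck15}.

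The genuine gap is the reverse inclusion, which is the entire content of the theorem and which your proposal describes without executing. Two places need real work. For $|S(a)|=3$ you assert that ``a direct case analysis identifies $a$ as a point of $L_{ij}$ or $L_{ijk}$,'' but most classes supported on three indices lie in neither; you must show that for such $a$ the bilinear system forces $b\in\C\cdot a$ \emph{unless} the three balancing equations hold, and that is a nontrivial rank computation for the restriction of $\delta_a$ to the six-dimensional coordinate subspace, not a tautology. For $|S(a)|\ge 4$ the ``propagation across the support graph'' is named but never made precise: as written there is no argument that the equations coming from triples inside $S(a)$ pin down $b$ up to scalar (note, for instance, that $a=a_{12}+a_{34}$ has support of size four and must be shown non-resonant even though no single relation of type \eqref{eq:I3} involves all of its support indices). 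Until the normal-form basis of $A^2$ is written down and these ranks are actually computed, your argument only establishes the lower bound $\bigcup L_{ij}\cup\bigcup L_{ijk}\subseteq\cR_1(\wP_n)$. A last, minor point: to conclude that these subspaces are the \emph{irreducible components} you should also observe that they are pairwise incomparable, so none is absorbed by another.
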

 
\begin{theorem}[\cite{SW3}]
\label{thm:ResonanceIntro}
For each $n\ge 2$, the first resonance variety of the group $wP_n^+$ has 
decomposition into irreducible components given by
\begin{equation*}
\cR_1(wP_n^+)=\bigcup\limits_{n-1\geq i>j\geq 1} L_{i,j},
\end{equation*}
 where $L_{i,j}$ is the linear subspace of dimension $j+1$ spanned by 
 $e_{i+1,j+1}$ and $e_{j+1,k}-e_{i+1,k}$, for $1\leq k\leq j$. 
\end{theorem}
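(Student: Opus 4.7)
The plan is to compute the resonance variety $\cR_1(\wP_n^+)$ directly from the quadratic presentation of the cohomology algebra $A = H^*(\wP_n^+;\C)$ recorded in Table \ref{table:cohomology}: $A = E/I$, where $E$ is the exterior algebra on generators $e_{ij}$ ($1\le i<j\le n$) and $I$ is the ideal generated by the relations $e_{ij}(e_{ik}-e_{jk})=0$ for $i<j<k$. Since $A$ is Koszul with a quadratic Gr\"obner basis (Table \ref{table:introHilb}), one obtains an explicit monomial basis for $A^2$, so the left-multiplication map $\mu_a : A^1 \to A^2$, $b \mapsto ab$, can be expressed coordinate-wise, and the condition $a \in \cR_1(\wP_n^+)$ becomes $\dim\ker\mu_a \ge 2$.

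For the containment $\bigcup L_{i,j} \subseteq \cR_1(\wP_n^+)$, I would show that each $L_{i,j}$ is an isotropic subspace for the cup product on $A^1$. The required computations are twofold: the product of $e_{i+1,j+1}$ with each $e_{j+1,k}-e_{i+1,k}$ (for $1 \le k \le j$) reduces to an instance of the defining relation \eqref{eq:I5} for the triple with indices $\{k,j+1,i+1\}$, and hence vanishes in $A^2$; for $k\ne l$, the cross product $(e_{j+1,k}-e_{i+1,k})(e_{j+1,l}-e_{i+1,l})$ expands into four terms that cancel after applying the relations \eqref{eq:I5} for the triples $\{k,l,j+1\}$ and $\{k,l,i+1\}$. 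Since $\dim L_{i,j}=j+1\ge 2$, for each $a\in L_{i,j}$ one can pick some non-proportional $b\in L_{i,j}$, witnessing $a\in\cR_1(\wP_n^+)$. Each $L_{i,j}$ is a linear subspace and hence irreducible, and a direct comparison of the spanning vectors shows that no $L_{i,j}$ is contained in another $L_{i',j'}$.

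For the reverse inclusion $\cR_1(\wP_n^+) \subseteq \bigcup L_{i,j}$, which is the heart of the proof, I start with $a = \sum x_{ij}e_{ij}\in\cR_1$ and $b = \sum y_{ij}e_{ij}$ not proportional to $a$ with $ab=0$. Expanding $ab = 0$ in the chosen monomial basis of $A^2$ yields a bilinear system in the $x_{ij}$'s and $y_{ij}$'s. The strategy is to exploit the column organisation of the generators: group the $e_{\ast,c}$ with $\ast<c$ as the column-$c$ block, and observe that the relations \eqref{eq:I5} couple only pairs of columns (a triple $\{i,j,k\}$ contributes a single relation connecting columns $j$ and $k$). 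A case analysis on which columns carry nonzero coefficients of $a$, together with a rank analysis of $\mu_a$ restricted to each pair of column blocks, forces the support of $a$ to involve essentially two column indices $j+1 < i+1$ and the coefficients to satisfy the precise linear constraints cutting out some $L_{i,j}$.

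The main obstacle lies in this reverse inclusion. The bilinear system from $ab=0$ has one equation per basis monomial of $A^2$, and its solution set must be stratified with care to identify the irreducible components. An induction on $n$ via the iterated semidirect-product decomposition \eqref{eq:wpnplus semi} offers a viable path: the Serre spectral sequence of the fibration $\wP_n^+ \to \wP_{n-1}^+$ with fibre $F_{n-1}$ can be used to relate resonance components of $\wP_n^+$ to those of $\wP_{n-1}^+$ together with new components arising from the column-$n$ generators $e_{1,n},\ldots,e_{n-1,n}$. The combinatorial bookkeeping required to identify exactly which old components persist and which new ones appear at each stage is where the technical difficulty concentrates.
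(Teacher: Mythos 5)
The survey does not actually prove Theorem~\ref{thm:ResonanceIntro}; it quotes it from \cite{SW3}. Judged on its own terms, your proposal has a genuine gap, and it occurs already in the ``easy'' direction. You propose to prove $\bigcup L_{i,j}\subseteq\cR_1(\wP_n^+)$ by showing each $L_{i,j}$ is isotropic, but this is false as soon as $j\geq 2$ (i.e., for every component of dimension $\geq 3$, which exist for all $n\geq 4$). The paper says so explicitly in \S\ref{subsec:Chen} (``the resonance varieties of $\wP_n^+$ do not satisfy the isotropicity hypothesis''); this failure of isotropy is precisely why the resonance--Chen ranks formula fails for $\wP_n^+$ in Table~\ref{table:resonance} and is what drives Proposition~\ref{prop:nonsplit}. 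Concretely, your claimed four-term cancellation of $(e_{j+1,k}-e_{i+1,k})(e_{j+1,l}-e_{i+1,l})$ does not happen: every defining relation of $H^*(\wP_n^+;\C)$ involves only monomials supported on a three-element index set, whereas two of your four terms are monomials on the four distinct indices $k,l,j+1,i+1$ whose index pairs are disjoint; these are standard monomials for the quadratic Gr\"{o}bner basis and survive reduction with coefficient $\pm 1$, so the product is nonzero in $A^2$. (Test case: $n=4$, $L_{3,2}$ is a $3$-plane that is not isotropic.) Membership of $L_{i,j}$ in $\cR_1$ therefore has to be established differently, by showing that for \emph{every} $a\in L_{i,j}$ the kernel of left-multiplication by $a$ on $A^1$ is at least $2$-dimensional, with an annihilating class $b=b(a)$ that varies with $a$ and does not lie in $L_{i,j}$ --- a determinantal computation, not an isotropy check.

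The reverse inclusion, which you rightly identify as the heart of the matter, is left as a sketch: the ``column block'' case analysis is not carried out, and the appeal to a Serre spectral sequence of the extension \eqref{eq:wpnplus semi} has no known mechanism behind it --- resonance varieties do not propagate along fibrations in any simple way, and the monodromy action here is nontrivial, so this route would require substantial new input rather than bookkeeping. Finally, be aware of a normalization clash: the spanning vectors of $L_{i,j}$ in the theorem match an \eqref{eq:I6}-type presentation rather than relation \eqref{eq:I5} as printed in Table~\ref{table:cohomology} (the survey warns that its presentations ``differ slightly'' from the original papers). Under the literal \eqref{eq:I5} even your first product $e_{i+1,j+1}(e_{j+1,k}-e_{i+1,k})$ does not reduce to a defining relation. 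Calibrating on $\wP_3^+\cong F_2\times\Z$, where $\cR_1$ is a single $2$-plane, will fix the correct convention before any general computation.
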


Much less is known about the resonance varieties of the virtual pure braid 
groups.   For low values of $n$, the varieties $\RR_1(\vP_n)$ and 
$\RR_1(\vP_n^+)$ have been determined in \cite{SW2}.  For  instance, 
the decomposition $\vP_3\cong \overline{P}_4*\Z$ and known properties 
of resonance varieties of free products leads to the equality 
$\cR_1(\vP_3)=H^1(\vP_3,\C)$.  We will need the following computation 
later on. 

\begin{prop}[\cite{SW2}]
\label{prop:resonancePV4+}
The resonance variety $\cR_1(\vP_4^+)$ is the degree $6$, irreducible, 
$3$-dimen\-sional subvariety of the affine space $H^1(\vP_4^+,\C)=\C^6$ 
defined by the equations
\begin{align*}
&x_{12}x_{24}(x_{13}+x_{23})+
x_{13}x_{34}(x_{12}-x_{23})-x_{24}x_{34}(x_{12}+x_{13})=0,\\
&x_{12}x_{23}(x_{14}+x_{24})+x_{12}x_{34}(x_{23}-x_{14})+
x_{14}x_{34}(x_{23}+x_{24})=0,\\
&x_{13}x_{23}(x_{14}+x_{24})+x_{14}x_{24}(x_{13}+x_{23}) 
+x_{34}(x_{13}x_{23}-x_{14}x_{24})=0,\\
&x_{12}(x_{13}x_{14}-x_{23}x_{24})+x_{34}(x_{13}x_{23}-x_{14}x_{24})=0.
\end{align*}
\end{prop}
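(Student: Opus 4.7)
The plan is to follow the standard recipe for computing the first resonance variety of a quadratic algebra, applied here to $A = H^*(\vP_4^+,\C)$. I would first record explicit bases of $A^1$ and $A^2$: by the presentation from Table \ref{table:cohomology}, the space $A^1$ has basis $\{e_{ij}:1\le i<j\le 4\}$, so $\dim A^1=6$. In degree two, the relations \eqref{eq:I5} and \eqref{eq:I6} applied to each of the four triangles $\{i,j,k\}\subset\{1,2,3,4\}$ force the chain of equalities $e_{ij}e_{ik}=e_{ij}e_{jk}=e_{ik}e_{jk}$, collapsing the three associated monomials to a single class, while the three products $e_{12}e_{34}$, $e_{13}e_{24}$, $e_{14}e_{23}$ coming from disjoint index pairs remain linearly independent; hence $\dim A^2 = 4+3 = 7$.

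Next, I would write down the multiplication map $\mu_a\colon A^1\to A^2$, $b\mapsto a\cdot b$, for $a=\sum_{i<j}x_{ij}e_{ij}$, as an explicit $7\times 6$ matrix $M(x)$ whose entries are linear forms in the coordinates $x_{ij}$. Since $a\in\ker\mu_a$ automatically, the defining condition for membership in $\cR_1(\vP_4^+)$ from \eqref{eq:r1g} becomes $\dim\ker\mu_a\ge 2$, equivalently $\rank M(x)\le 4$, which is cut out scheme-theoretically by the ideal generated by all $5\times 5$ minors of $M(x)$. I would then compute this minor ideal using the Gr\"obner basis machinery already employed in \cite{SW3}; after extracting the radical and discarding redundant generators, the minimal generating set should simplify to precisely the four cubic polynomials displayed in the statement.

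To conclude, I would verify that the common zero locus $V\subset\C^6$ of those four cubics has the asserted geometric structure. Exhibiting a single smooth point at which the $4\times 6$ Jacobian matrix has rank three simultaneously establishes $\dim V\ge 3$ at that point and local irreducibility there, while the assertion $\dim V \le 3$ is most cleanly obtained by constructing an explicit dominant rational map from $\C^3$ onto $V$, which moreover packages the global irreducibility claim. The degree-six assertion can then be read off either from the degree of this parametrization, or alternatively as the multiplicity of a generic linear section, computed in a computer algebra system. The principal obstacle is not the minor extraction, which is entirely mechanical, but rather establishing that the minor ideal is \emph{prime} with exactly the four listed cubics as minimal generators of its radical; this step genuinely relies on explicit algebraic computation rather than conceptual reasoning, and also explains why an analogous complete description of $\cR_1(\vP_n^+)$ for larger $n$ remains out of reach.
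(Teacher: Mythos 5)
Your proposal is correct and follows essentially the same route as the source: the survey states this result without proof, citing \cite{SW2}, where $\cR_1(\vP_4^+)$ is likewise computed from the presentation of $H^{\le 2}(\vP_4^+,\C)$ (with $\dim A^1=6$, $\dim A^2=7$, exactly as you derive from \eqref{eq:I5}--\eqref{eq:I6}) by writing the multiplication map $\mu_a\colon A^1\to A^2$ as a matrix of linear forms, imposing the rank condition via minors, and extracting the radical and the geometric data (irreducibility, dimension, degree) by computer algebra. Your accounting of which steps are conceptual and which are irreducibly computational matches the situation accurately.
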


\section{Lie algebras and formality}
\label{sec:Liealgebras}

\subsection{The associated graded Lie algebra of a group}
\label{subsec:grlie}

The \emph{lower central series}\/ of a group $G$ is a descending sequence of normal 
subgroups, $\{ \gamma_kG\}_{k\geq 1}$, defined inductively by $ \gamma_1G=G$ and
$ \gamma_{k+1}G=[ \gamma_kG,G]$.  The successive quotients of this series, 
$\gamma_k G/\gamma_{k+1} G$, are abelian groups. 
The direct sum of these groups, 
\begin{equation}
\label{eq:grg}
\gr(G;\Z)=\bigoplus\limits_{k\geq 1} \gamma_kG/ \gamma_{k+1}G,
\end{equation}
endowed with the Lie bracket $[x,y]$ induced from the group commutator,  
has the structure of a graded Lie algebra over $\Z$.  
The {\em associated graded Lie algebra}\/ of $G$ over $\C$ is defined as 
$\gr(G)=\gr(G;\Z)\otimes_{\Z}\C$.  Before proceeding, let us recall the following 
lemma due to Falk and Randell \cite{Falk-Randell88}.

\begin{lem}[\cite{Falk-Randell88}]
\label{lem:FR}
Let $\xymatrixcolsep{11pt}
\xymatrix{1 \ar[r]& N \ar[r]& G\ar[r]& Q \ar[r]& 1}$ 
be a split exact sequence of groups, and suppose $Q$ acts trivially 
on $N/[N,N]$.  Then, for each $k\geq 1$, there is an induced split exact sequence,
$\xymatrixcolsep{11pt}
\xymatrix{1\ar[r]& \gamma_k(N) \ar[r]& \gamma_k(G)\ar[r]& \gamma_k(Q)\ar[r]& 1}$.
\end{lem}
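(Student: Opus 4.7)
The plan is to proceed by induction on $k$, with the base case $k=1$ being the given short exact sequence. Using the splitting $s \colon Q \inj G$, identify $G$ with the semidirect product $N \rtimes Q$, where $Q$ acts on $N$ by $\phi_q(n) = s(q)\, n\, s(q)^{-1}$; every element of $G$ then has a unique expression $n \cdot s(q)$. The strategy is to introduce the auxiliary subset $\widetilde{\gamma}_k(G) := \gamma_k(N) \cdot s(\gamma_k(Q))$ and prove two things: first, that it is a subgroup fitting into a split short exact sequence $1 \to \gamma_k(N) \to \widetilde{\gamma}_k(G) \to \gamma_k(Q) \to 1$; and second, that $\widetilde{\gamma}_k(G) = \gamma_k(G)$.

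That $\widetilde{\gamma}_k(G)$ is a subgroup uses only that each $\phi_q$ preserves $\gamma_k(N)$, and the uniqueness of the $n \cdot s(q)$ decomposition gives both $\widetilde{\gamma}_k(G)\cap N = \gamma_k(N)$ and a canonical splitting by $s|_{\gamma_k(Q)}$. The inclusion $\widetilde{\gamma}_k(G) \subseteq \gamma_k(G)$ is immediate. For the reverse inclusion, induct: assuming $\gamma_k(G) = \widetilde{\gamma}_k(G)$, one must show $[\widetilde{\gamma}_k(G), G] \subseteq \widetilde{\gamma}_{k+1}(G)$. Expanding via the identities $[ab,c]=[a,c]^b[b,c]$ and $[a,bc]=[a,c][a,b]^c$, this reduces to checking the four basic commutators $[n,m]$, $[n, s(r)]$, $[s(q), m]$, $[s(q), s(r)]$ for $n \in \gamma_k(N)$, $q \in \gamma_k(Q)$, $m \in N$, $r \in Q$. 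The commutators $[n,m] \in \gamma_{k+1}(N)$ and $[s(q), s(r)] = s([q,r]) \in s(\gamma_{k+1}(Q))$ are immediate, and $[n,s(r)] = (\phi_r(n)n^{-1})^{-1}\cdot(\text{conjugate})$ lies in $\gamma_{k+1}(N)$ because $Q$ acts trivially on each graded piece $\gr_j(N)$ (by induction on $j$, since $\gr_j(N)$ is generated as an abelian group by brackets of elements of $\gr_1(N)$ and $\gr_{j-1}(N)$, on which $Q$ acts trivially, and the bracket is $Q$-equivariant).

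The main obstacle is the third case: showing $[s(q), m] = \phi_q(m)\,m^{-1} \in \gamma_{k+1}(N)$ whenever $q \in \gamma_k(Q)$ and $m \in N$. Triviality on a single graded quotient is not enough here; one needs that the full element $q \in \gamma_k(Q)$ acts trivially on the nilpotent quotient $N/\gamma_{k+1}(N)$. For this I would invoke the Andreadakis filtration on $\Aut(N)$, namely
\[
\mathcal{F}_j(N) := \{\varphi \in \Aut(N) : \varphi(x)x^{-1} \in \gamma_{j+1}(N) \text{ for all } x \in N\},
\]
which satisfies the standard commutator inclusion $[\mathcal{F}_i(N), \mathcal{F}_j(N)] \subseteq \mathcal{F}_{i+j}(N)$. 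The hypothesis that $Q$ acts trivially on $N^{\ab}$ says precisely that the homomorphism $Q \to \Aut(N)$ lands in $\mathcal{F}_1(N)$; iterating the commutator inclusion then forces $\gamma_k(Q) \to \mathcal{F}_k(N)$, which is the desired statement that $\gamma_k(Q)$ acts trivially on $N/\gamma_{k+1}(N)$.

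Once the equality $\gamma_k(G) = \widetilde{\gamma}_k(G)$ is established for all $k$, the induced sequence $1 \to \gamma_k(N) \to \gamma_k(G) \to \gamma_k(Q) \to 1$ is exact with the canonical splitting supplied by $s$, completing the proof.
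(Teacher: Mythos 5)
The paper states this lemma with a citation to \cite{Falk-Randell88} and gives no proof of its own, so there is nothing internal to compare against; judged on its own terms, your argument is correct and is essentially the classical Falk--Randell proof: an induction showing $\gamma_k(G)=\gamma_k(N)\rtimes s(\gamma_k(Q))$, reduced via commutator identities to the four basic commutator types. You correctly isolate the two genuinely nontrivial points --- that $Q$ acts trivially on every graded piece $\gr_j(N)$ (your induction on $j$ using $Q$-equivariance of the surjection $\gr_1\otimes\gr_{j-1}\to\gr_j$ is fine), and that $\gamma_k(Q)$ acts trivially on $N/\gamma_{k+1}(N)$. For the latter you invoke the Andreadakis--Johnson filtration and the inclusion $\gamma_k(\varPhi_1)\subseteq\varPhi_k$ (Kaloujnine), which is exactly the tool this survey itself uses in \S4.3 in the discussion surrounding Theorem \ref{thm:andreadakis}; Falk and Randell phrase the same step directly via iterated commutator identities rather than through a named filtration, but the content is identical. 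One small point worth making explicit if you write this up in full: to conclude that the subgroup $[\widetilde{\gamma}_k(G),G]$ generated by your basic commutators lands in $\widetilde{\gamma}_{k+1}(G)$, you also need $\widetilde{\gamma}_{k+1}(G)$ to be closed under $G$-conjugation, which again reduces to the same key claim one level up --- so no new idea is needed, but the step should be recorded.
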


If a group $G$ is finitely generated, its lower central series quotients 
are finitely generated abelian groups.  Set
\begin{equation}
\label{eq:phikg}
\phi_k(G)=\rank \gamma_k G/\gamma_{k+1} G,
\end{equation}
or, equivalently, $\phi_k(G)=\dim \gr_k(G)$.  These LCS ranks can be computed 
from the Hilbert series of the universal enveloping algebra of $\gr(G)$ 
by means of the Poincar\'{e}--Birkhoff--Witt theorem, as follows:
\begin{equation}
\label{eq:pbw}
\prod_{k\geq 1}(1-t^k)^{-\phi_k(G)}=\Hilb(U(\gr(G)),t).
\end{equation}

A finitely generated group $G$ is said to be {\em graded-formal}\/ if the associated 
graded Lie algebra $\gr(G)$ is quadratic, that is, admits a presentation 
with generators in degree $1$ and relators in degree $2$.  The 
next theorem was proved under some stronger 
hypothesis by Papadima and Yuzvinsky 
in \cite{Papadima-Yuzvinsky}, and essentially 
in this form in \cite{SW1}. For completeness, we sketch the 
proof.

\begin{theorem}[\cite{Papadima-Yuzvinsky, SW1}]
\label{thm:lcs koszul}
If $G$ is a finitely generated, graded-formal group, 
and its cohomology algebra, $A=H^*(G;\C)$, is Koszul, then the LCS 
ranks of $G$ are given by
\begin{equation}
\label{eq:lcsf}
\prod_{k=1}^{\infty} (1-t^k)^{\phi_k(G)} =  \Hilb(A, -t).
\end{equation}
\end{theorem}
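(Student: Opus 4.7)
The plan is to combine the Poincar\'{e}--Birkhoff--Witt identity \eqref{eq:pbw} with Koszul duality on the cohomology side. First, taking reciprocals in \eqref{eq:pbw} shows that the target formula \eqref{eq:lcsf} is equivalent to the single polynomial identity
\[
\Hilb(U(\gr(G)),t) \cdot \Hilb(A,-t) \;=\; 1,
\]
so the whole task reduces to producing such a relation between the universal enveloping algebra of $\gr(G)$ and the cohomology algebra $A$.

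The next step is to replace $\gr(G)$ by a purely cohomological object, using graded-formality. To any connected quadratic algebra $A$ one associates its \emph{holonomy Lie algebra} $\mathfrak{h}(A)$, defined as the quotient of the free Lie algebra on $(A^1)^{*}$ by the ideal generated by the transpose of the cup-product map $A^1\wedge A^1\to A^2$. For any finitely generated group $G$ there is a canonical surjection of graded Lie algebras $\mathfrak{h}(H^*(G;\C)) \twoheadrightarrow \gr(G)$ obtained by comparing their (common) degree-one parts, and the graded-formality hypothesis is precisely the assertion that this surjection is an isomorphism. I would therefore invoke graded-formality of $G$ to substitute $U(\mathfrak{h}(A))$ for $U(\gr(G))$.

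Having made this replacement, I would finish by appealing to Koszul duality. Viewing $A$ as a quadratic associative algebra (with graded-commutativity built into its defining ideal), a classical computation going back to L\"{o}fwall and Priddy identifies the quadratic dual $A^{!}$ with the universal enveloping algebra $U(\mathfrak{h}(A))$. The Koszulness of $A$ then provides the standard Hilbert series relation
\[
\Hilb(A^{!},t)\cdot \Hilb(A,-t)\;=\;1,
\]
which is exactly the identity to which we reduced in the first paragraph. Substituting backwards through \eqref{eq:pbw} gives \eqref{eq:lcsf}.

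The main obstacle is the identification $\gr(G)\cong \mathfrak{h}(A)$ under graded-formality: the surjection $\mathfrak{h}(A)\twoheadrightarrow \gr(G)$ is elementary, but upgrading it to an isomorphism requires carefully matching the cup-product relations in $A^2$ with the degree-two Lie relations postulated by a quadratic presentation of $\gr(G)$. The Koszulness hypothesis then plays a purely algebraic role, converting the isomorphism of enveloping algebras into the Hilbert series identity via the numerical consequence of a linear minimal free resolution.
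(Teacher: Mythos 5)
Your argument is correct and follows essentially the same route as the paper: reduce via the Poincar\'e--Birkhoff--Witt identity \eqref{eq:pbw} to the Koszul duality formula $\Hilb(A,t)\cdot\Hilb(A^!,-t)=1$, after identifying $U(\gr(G))$ with the quadratic dual $A^!$. The paper obtains that identification by citing \cite[Lemma 4.1]{Papadima-Yuzvinsky}, which is precisely the L\"ofwall--Priddy-type statement $A^!\cong U(\fh(A))$ combined with the graded-formality isomorphism $\fh(A)\cong\gr(G)$ that you spell out explicitly.
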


\begin{proof}
\smartqed
By assumption, the graded Lie algebra $\fg=\gr(G)$ is quadratic, 
while the algebra $A=H^*(G;\C)$ is Koszul, hence quadratic.  
Write $A=T(V)/I$, where $T(V)$ is the tensor algebra on a 
finite-dimensional $\C$-vector space $V$ concentrated in 
degree $1$, and $I$ is an ideal generated in degree $2$.   
Define the quadratic dual of $A$ as $A^!=T(V^*)/I^{\perp}$, 
where $V^*$ is the vector space dual to $V$ and $I^{\perp}$ 
is the ideal generated by all $\alpha \in V^*\wedge V^*$ with 
$\alpha(I_2)=0$, see \cite{SY97}. 
It follows from \cite[Lemma 4.1]{Papadima-Yuzvinsky}  
that $A^!$ is isomorphic to $U(\fg)$. 

Now, since $A$ is Koszul, its quadratic dual is also Koszul. 
Thus, the following Koszul duality formula holds:
\begin{equation}
\label{eq:kdual}
\Hilb(A,t)\cdot \Hilb(A^!,-t)=1.
\end{equation}
Putting things together and using \eqref{eq:pbw} completes 
the proof.
\qed
\end{proof}

\subsection{The LCS ranks of the pure-braid like groups}
\label{subsec:lcs pb}

We now turn to the associated graded Lie algebras of the 
pure braid-like groups.  We start by listing the types of relations occurring 
in these Lie algebras:
\begin{align*}
&[x_{ij},x_{ik}]+[x_{ij},x_{jk}]+[x_{ik},x_{jk}]=0 &\textrm{for distinct $i,j,k$},
\label{eq:L1}\tag{L1}\\
& [x_{ij},x_{kl}]=0  &\textrm{for } \{i,j\}\cap\{k,l\}=\emptyset,
\label{eq:L2}\tag{L2}\\
& [x_{ik},x_{jk}]=0 & \textrm{for distinct $i,j,k$},
\label{eq:L3}\tag{L3}\\
& [x_{im}, x_{ij}+x_{ik}+x_{jk}]=0 & \textrm{for $m=j,k$ and  $i,j,m$ distinct}.
\label{eq:L4}\tag{L4}
\end{align*}

The corresponding presentations for the associated graded 
Lie algebras of the groups $P_n$, $\wP_n$, 
$\wP^+_n$, $\vP_n$, and $\vP^+_n$ are summarized in Table \ref{table:grpure}.
It is readily seen that all these graded Lie algebras are quadratic.
Consequently, all  aforementioned pure braid-like groups 
are graded-formal.

\def\arraystretch{1.5}%
\begin{table}
\caption{Associated graded Lie algebras of the pure braid-like groups. 
\label{table:grpure}}
\begin{tabular}{|M{1.7cm}|M{1.7cm}|M{1.8cm}|M{1.8cm}|M{1.7cm}|M{1.8cm}|N}
\hline 
 &  $\gr(P_n)$ \qquad   {\scriptsize \cite{Kohno83,Falk-Randell}}
 & $\gr(wP_n)$ \qquad  {\scriptsize\cite{Jensen-McCammond-Meier06}} 
 & $\gr(wP_n^+)$  \qquad {\scriptsize\cite{Cohen-P-V-Wu}} 
 &$\gr(vP_n)$ {\scriptsize\cite{Bartholdi-E-E-R,Lee}} 
 &$\gr(vP_n^+)$  {\scriptsize\cite{Bartholdi-E-E-R,Lee}} &\\[4pt]
\hline 
Generators & $x_{ij}$,  \hbox{$1\leq i<j\leq n$} & $x_{ij}$, \hbox{$1\leq i\neq j\leq n$} 
& $x_{ij}$,  \hbox{$1\leq i<j\leq n$}  & $x_{ij}$,  \hbox{$1\leq i\neq j\leq n$}  
&$x_{ij}$, \hbox{$1\leq i<j\leq n$} &\\[4pt]
 \hline
Relations &  \eqref{eq:L2} \eqref{eq:L4}  &\eqref{eq:L1}  \eqref{eq:L2} \eqref{eq:L3} 
& \eqref{eq:L1} \eqref{eq:L2} \eqref{eq:L3}   & \eqref{eq:L1} \eqref{eq:L2} 
&\eqref{eq:L1} \eqref{eq:L2} &\\[6pt]
 \hline
\end{tabular}
\end{table}
\def\arraystretch{1}%

The various homomorphisms between the pure-braid like 
groups defined previously induce morphisms 
between the corresponding associated graded Lie algebras. These 
morphisms fit into the following commuting diagram.

\begin{equation}
\label{eq:grLie-diagram}
\begin{tikzpicture}[baseline=(current  bounding  box.center)]
\matrix (m) [matrix of math nodes, 
row sep={2em,between origins}, 
column sep={3.4em,between origins}]{
\gr(\vP_n^+)  & &\gr(\vP_n)  &    \\
  && &&  \gr(P_n) \,.   \\ 
\gr(\wP_n^+) &  &  \gr(\wP_n)   & \\ };
\path[->>]
 (m-1-1) edge (m-3-1)
 (m-1-3) edge (m-3-3) 
;
\path[right hook->]
 (m-1-1) edge (m-1-3)
 (m-2-5) edge (m-1-3)
  (m-3-1) edge (m-3-3)
;
\path[->]
  (m-2-5) edge (m-3-3);
\end{tikzpicture} 
\end{equation}

As shown by Bartholdi et al.~in \cite{Bartholdi-E-E-R}, 
the morphism $\gr(\varphi_n)\colon \gr(P_n) \to \gr(\vP_n)$ 
is injective.   Using the presentations given in Table \ref{table:grpure}, we see that 
the remaining morphisms are either injective or surjective (as indicated in the 
diagram), with the possible exception of $\gr(\psi_n)\colon \gr(P_n) \to \gr(\wP_n)$, 
whose injectivity has not been established, as far as we know.  

The LCS ranks of the pure braid groups $P_n$ were computed by 
Kohno \cite{Kohno84}, using methods from rational homotopy theory, 
and by Falk and Randell \cite{Falk-Randell}, using the decomposition 
\eqref{eq:pn semi} and Lemma \ref{lem:FR}.  
The LCS ranks of the upper welded braid groups $\wP_n^+$ were 
computed by F.~Cohen et al. \cite{Cohen-P-V-Wu}, using the decomposition 
\eqref{eq:wpnplus semi} and again Lemma \ref{lem:FR}. Finally, 
work of Bartholdi et al. \cite{Bartholdi-E-E-R} and P.~Lee \cite{Lee} 
gives the LCS ranks of $\vP_n$ and $\vP_n^+$. We summarize 
these results in the next theorem.

\begin{theorem}
\label{thm:lcs vpn}
The LCS ranks of the groups $G=P_n$, $\wP_n^+$, $\vP_n$, 
and $\vP_n^+$ are given by the identity 
$\prod_{k=1}^{\infty} (1-t^k)^{\phi_k(G)} =  \Hilb(H^*(G;\C), -t)$, 
with the relevant Hilbert series given in Table \ref{table:introHilb}.
\end{theorem}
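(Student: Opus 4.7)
The plan is to apply Theorem \ref{thm:lcs koszul} to each of the four families $G = P_n, \wP_n^+, \vP_n, \vP_n^+$, one family at a time. Recall that this theorem requires two hypotheses: that $G$ be graded-formal, and that its cohomology algebra $A = H^*(G;\C)$ be Koszul. So the task reduces to verifying these two hypotheses in each case and then reading off the generating function identity.

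First, I would dispense with graded-formality. A group is graded-formal as soon as its associated graded Lie algebra admits a quadratic presentation. Inspecting the presentations recorded in Table \ref{table:grpure}, we see that for each of $P_n, \wP_n^+, \vP_n, \vP_n^+$, the generators $x_{ij}$ sit in degree $1$ and the defining relations \eqref{eq:L1}--\eqref{eq:L4} are all degree-$2$ brackets. Thus all four groups are graded-formal. (Note that $\wP_n$ is excluded from the statement precisely because the Koszul hypothesis fails for it when $n \geq 4$, by Conner--Goetz \cite{Conner-Goetz15}.)

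Second, I would cite Koszulness. According to the summary in Table \ref{table:introHilb}, each of the cohomology algebras $H^*(P_n;\C)$, $H^*(\wP_n^+;\C)$, $H^*(\vP_n;\C)$, and $H^*(\vP_n^+;\C)$ is known to be Koszul, via the references given there (\cite{Arnold69,Kohno84,SY97} for $P_n$; \cite{Cohen-Pruidze08} for $\wP_n^+$; \cite{Bartholdi-E-E-R,Lee} for both $\vP_n$ and $\vP_n^+$). In each case, Koszulness follows from the existence of a quadratic Gr\"obner basis for the defining ideal given in Table \ref{table:cohomology}, a fact recalled in \S\ref{subsec:cohomology}.

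With both hypotheses verified, Theorem \ref{thm:lcs koszul} applies directly and gives
\[
\prod_{k=1}^{\infty} (1-t^k)^{\phi_k(G)} = \Hilb(H^*(G;\C), -t)
\]
for each of the four groups, with the relevant Hilbert series listed in Table \ref{table:introHilb}. The main obstacle in this plan is not really an obstacle at all: everything that needs doing has already been done in the references, and the role of this theorem is to package those facts uniformly via Koszul duality. The only care required is to note that the argument is not available for $\wP_n$ itself, which is why that group is absent from the statement.
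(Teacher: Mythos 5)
Your argument is correct, and it is precisely the route the paper itself flags immediately after the theorem with the sentence ``Alternatively, this result follows from Theorem~\ref{thm:lcs koszul} once it is shown that, in all these cases, the Lie algebra $\gr(G)$ is quadratic and the cohomology algebra $A=H^*(G;\C)$ is Koszul.'' The paper's \emph{primary} justification, however, is not this uniform Koszul-duality argument but a case-by-case appeal to the original computations: for $P_n$ it cites Kohno \cite{Kohno84} (rational homotopy methods) and Falk--Randell \cite{Falk-Randell}, who use the iterated semidirect product decomposition \eqref{eq:pn semi} together with Lemma~\ref{lem:FR}; for $\wP_n^+$ it cites F.~Cohen et al.~\cite{Cohen-P-V-Wu}, who use the analogous decomposition \eqref{eq:wpnplus semi} and again Lemma~\ref{lem:FR}; and for $\vP_n$ and $\vP_n^+$ it cites Bartholdi et al.~\cite{Bartholdi-E-E-R} and Lee \cite{Lee}. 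The trade-off is this: your approach packages all four families into one clean deduction, but it pushes the real content (quadraticity of $\gr(G)$ as in Table~\ref{table:grpure}, and Koszulness of $H^*(G;\C)$ as in Table~\ref{table:introHilb}) entirely into the cited literature; the Falk--Randell route for $P_n$ and $\wP_n^+$ is more elementary and self-contained, since the semidirect product decompositions into free groups yield the LCS ranks directly, with no Koszulness input needed. Your parenthetical about why $\wP_n$ is excluded (failure of Koszulness for $n\ge 4$ by Conner--Goetz \cite{Conner-Goetz15}) matches the paper's subsequent discussion, where it is noted that $\phi_8(\wP_4)=24\,490$ rather than the value $24\,480$ predicted by formula \eqref{eq:lcsf}.
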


Alternatively, this result follows from Theorem \ref{thm:lcs koszul} 
once it is shown that, in all these cases, the Lie algebra $\gr(G)$ is 
quadratic and the cohomology algebra $A=H^*(G;\C)$ is Koszul. 

On the other hand, as mentioned previously, the cohomology algebras 
$H^*(\wP_n, \C)$ are not Koszul for $n\ge 4$. The (computer-aided) 
proof of this fact by Conner and Goetz \cite{Conner-Goetz15}
implies that the LCS ranks of $\wP_n$ are {\em not}\/ given by formula \eqref{eq:lcsf}.  
For instance, the formula would say that the first eight LCS ranks of $\wP_4$ are 
$12, 18, 60, 180, 612, 2\,010, 7\,020$, and $24\,480$.  Using the computations 
from \cite{Conner-Goetz15}, we see that the first seven values of $\phi_k(\wP_4)$ 
are correct, but that $\phi_8(\wP_4)=24\,490$.

\subsection{Residual properties}
\label{subsec:residual}

Let $\mathcal{P}$ be a group-theoretic property.
A group $G$ is said to be \emph{residually} $\cP$ if for any $g\in G$, $g\neq 1$,
there exists a group $Q$ satisfying property $\cP$, and an epimorphism
$\psi\colon G\to Q$ such that $\psi(g)\neq 1$.

We are mainly interested here in the residual properties related 
to the lower central series of $G$.  For instance, we say 
that the group $G$ is \emph{residually nilpotent}\/  if every 
non-trivial element can be detected in a nilpotent quotient.  
This happens precisely when the nilpotent radical of $G$ 
is trivial, that is, 
$\bigcap_{k\geq 1}  \gamma_{k}G=\{1\}$.  

Likewise, we say that a group $G$ is \emph{residually torsion-free nilpotent}\/ 
if every non-trivial element can be detected in a torsion-free 
nilpotent quotient. This happens precisely when
$\bigcap_{k\geq 1}   \tau_{k}G=\{1\}$, where 
\begin{equation}
\label{eq:tau}
\tau_{k}G=\{g\in G\mid 
\text{$g^n \in \gamma_{k}G$, for some $n\in \N$} \}.
\end{equation}

Clearly, the second property is stronger than the first. 
Nevertheless, the following holds:  if $G$ is residually 
nilpotent and $\gr_{k} (G,\Z)$ is torsion-free, for each 
$k\ge 1$, then $G$ is residually torsion-free nilpotent. In turn, 
this last property implies that $G$ is torsion-free.  Moreover, 
residually nilpotent groups are residually finite.

For a group $G$, the properties of being residually nilpotent or  
residually torsion-free nilpotent are inherited by subgroups $H<G$, 
since $\gamma_k H < \gamma_k G$ and 
$\tau_k H < \tau_k G$.  Moreover, both properties 
are preserved under direct products and free products, see Malcev \cite{Malcev49} 
and Baumslag \cite{Baumslag99}. For more on this subject, 
see also \cite{Bardakov-B09, BB09, KMP11}. 

In \cite{Andreadakis65},  Andreadakis  introduced a new filtration 
on the  automorphism group of a group $G$, nowadays called 
the Andreadakis--Johnson filtration.  This filtration is defined by setting 
\begin{equation}
\label{eq:phik}
\varPhi_k (\Aut(G))= \ker (\Aut(G) \to \Aut(G/\gamma_{k+1}(G)),  
\end{equation}
for all $k\ge 0$.  Note that $\varPhi_0(\Aut(G))=\Aut(G)$; the group 
$\cI(G)=\varPhi_1(\Aut(G))$ is called the {\em Torelli group}\/ of $G$. 

As shown by Andreadakis, if the intersection 
$\bigcap_{k\geq 1}  \gamma_{k}G$ is trivial then  the 
intersection $\bigcap_{k\geq 1}  \varPhi_{k} (\Aut (G))$ is also trivial. 
Furthermore, a theorem of L.~Kaloujnine implies that 
$\gamma_k(\cI(G)) < \varPhi_{k} (\Aut(G))$ for all $k\ge 1$, 
see e.g.~\cite{PS12}.
We thus have the following basic result.

\begin{theorem}[\cite{Andreadakis65}]
\label{thm:andreadakis}
Let $G$ be a residually nilpotent group. Then the Torelli group $\cI(G)$ 
is also residually nilpotent.  
\end{theorem}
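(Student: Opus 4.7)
The plan is to chain together the two cited ingredients in a direct way: Andreadakis' observation that residual nilpotence of $G$ forces the Andreadakis--Johnson filtration on $\Aut(G)$ to intersect trivially, and Kaloujnine's theorem comparing the lower central series of $\cI(G)$ with that filtration. Since residual nilpotence of $\cI(G)$ is equivalent to the statement $\bigcap_{k\ge 1}\gamma_k(\cI(G)) = \{1\}$, the whole proof reduces to showing that this intersection sits inside a subgroup already known to be trivial.

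First, I would start from the hypothesis: $G$ is residually nilpotent, so $\bigcap_{k\ge 1}\gamma_k G = \{1\}$. Applying Andreadakis' theorem (the result sketched just before the statement, from \cite{Andreadakis65}), we conclude that $\bigcap_{k\ge 1}\varPhi_k(\Aut(G)) = \{1\}$, where $\varPhi_k(\Aut(G)) = \ker\bigl(\Aut(G)\to \Aut(G/\gamma_{k+1}G)\bigr)$ as in \eqref{eq:phik}.

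Next, I would invoke the Kaloujnine inclusion $\gamma_k(\cI(G)) \subseteq \varPhi_k(\Aut(G))$ for every $k\ge 1$, which holds because the Torelli group $\cI(G) = \varPhi_1(\Aut(G))$ by definition acts trivially on $G/\gamma_2 G$, and iterating the commutator with $\cI(G)$ deepens the filtration level by one (see e.g.~\cite{PS12}). Taking intersections over $k$ then gives
\begin{equation*}
\bigcap_{k\ge 1}\gamma_k(\cI(G)) \;\subseteq\; \bigcap_{k\ge 1}\varPhi_k(\Aut(G)) \;=\; \{1\},
\end{equation*}
which is exactly the assertion that $\cI(G)$ is residually nilpotent.

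Since both supporting facts are classical and already attributed in the surrounding text, there is no substantial obstacle here; the main thing to make sure of is that Kaloujnine's inclusion is invoked with the correct indexing convention (matching $\varPhi_1$ with $\cI(G)$ rather than an off-by-one shift), and that the Andreadakis step is cited in the exact form stated in the excerpt. The argument is essentially a two-line deduction once the two black-box results are in place.
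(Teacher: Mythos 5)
Your argument is correct and is precisely the one the paper gives in the two sentences immediately preceding the theorem: Andreadakis' result that $\bigcap_{k\geq 1}\gamma_k G=\{1\}$ forces $\bigcap_{k\geq 1}\varPhi_k(\Aut(G))=\{1\}$, combined with Kaloujnine's inclusion $\gamma_k(\cI(G))<\varPhi_k(\Aut(G))$. No differences to report.
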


As noted by Hain \cite{Hain97} in the case of the Torelli group of 
a Riemann surface and proved by Berceanu and Papadima 
\cite{Berceanu-Papadima09} in full generality, a stronger 
assumption leads to a stronger conclusion.

\begin{theorem}[\cite{Hain97, Berceanu-Papadima09}]
\label{thm:hbp}
Let $G$ be a finitely generated, residually nilpotent group, and suppose 
$\gr_k(G,\Z)$ is torsion-free for all $k\ge 1$.  Then the Torelli group $\cI(G)$ 
is residually torsion-free nilpotent. 
\end{theorem}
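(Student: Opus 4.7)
The plan is to find an explicit filtration of $\cI(G)$ by normal subgroups whose successive quotients are torsion-free abelian and whose intersection is trivial. The natural candidate is the restriction of the Andreadakis--Johnson filtration: set $\varPhi_k(\cI(G)) := \cI(G)\cap \varPhi_k(\Aut(G))$, so that $\varPhi_1(\cI(G))=\cI(G)$. It then suffices to verify two things: that $\bigcap_{k\ge 1}\varPhi_k(\cI(G))=\{1\}$, and that each quotient $\cI(G)/\varPhi_k(\cI(G))$ is torsion-free nilpotent.

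The first property is immediate from the theorem of Andreadakis invoked in the proof of Theorem~\ref{thm:andreadakis}: residual nilpotence of $G$ yields $\bigcap_k \varPhi_k(\Aut(G))=\{1\}$, and intersecting with $\cI(G)$ gives the claim. For the second property, Kaloujnine's inclusion $\gamma_k(\cI(G)) \subseteq \varPhi_k(\Aut(G))$ shows that $\cI(G)/\varPhi_k(\cI(G))$ is nilpotent of class at most $k-1$; hence the question reduces to showing that each graded piece $\varPhi_k(\cI(G))/\varPhi_{k+1}(\cI(G))$ is torsion-free as an abelian group, since a nilpotent group admitting a central series with torsion-free quotients is itself torsion-free.

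This torsion-freeness of the associated graded is the crux of the argument, and the main technical obstacle. I would establish it by means of the Johnson homomorphism. Using Andreadakis' commutator identity $[\varPhi_j,\varPhi_k]\subseteq \varPhi_{j+k}$, one defines, for each $k\ge 1$, an injective homomorphism
\begin{equation*}
J_k\colon \varPhi_k(\Aut(G))/\varPhi_{k+1}(\Aut(G)) \inj \hom\bigl(\gr_1(G,\Z),\,\gr_{k+1}(G,\Z)\bigr),
\end{equation*}
sending the class of $\alpha$ to the map $\bar x\mapsto \overline{\alpha(x)\,x^{-1}}$. The torsion-freeness hypothesis on each $\gr_j(G,\Z)$ is exactly what makes the target of $J_k$ torsion-free, so the source is torsion-free, and hence so is its subgroup $\varPhi_k(\cI(G))/\varPhi_{k+1}(\cI(G))$.

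Assembling the pieces, for every non-identity $g\in \cI(G)$ one finds some $k$ with $g\notin \varPhi_k(\cI(G))$, and the projection $\cI(G)\surj \cI(G)/\varPhi_k(\cI(G))$ detects $g$ inside a torsion-free nilpotent group. The delicate input beyond Theorem~\ref{thm:andreadakis} is precisely the torsion-freeness of each $\gr_k(G,\Z)$, which feeds through the Johnson homomorphism to upgrade \emph{residually nilpotent} to \emph{residually torsion-free nilpotent}.
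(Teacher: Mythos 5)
The paper does not prove this theorem; it only cites \cite{Hain97, Berceanu-Papadima09} and records the two ingredients (Andreadakis's triviality of $\bigcap_k \varPhi_k(\Aut(G))$ and Kaloujnine's inclusion $\gamma_k(\cI(G)) < \varPhi_k(\Aut(G))$) in the surrounding text. Your argument is correct and is exactly the cited Hain/Berceanu--Papadima proof: you filter $\cI(G)$ by the Andreadakis--Johnson filtration and use the Johnson homomorphism $J_k$ to transfer torsion-freeness of $\gr_{k+1}(G,\Z)$ to the graded pieces $\varPhi_k/\varPhi_{k+1}$, which is precisely where the extra hypothesis upgrades the conclusion of Theorem \ref{thm:andreadakis}.
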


We specialize now to the case $G=F_n$.  In \cite{Magnus35}  
Magnus showed that all free groups  are residually torsion-free 
nilpotent (this also follows from the aforementioned results 
of Malcev and Baumslag).  Furthermore, P.~Hall and Magnus 
showed that $\gr(F_n,\Z)$ is the free Lie algebra 
on $n$ generators, and thus is torsion-free (see \cite[Ch.~IV, \S{6}]{Serre}). 
Therefore, by Theorem \ref{thm:hbp}, 
the Torelli group $\IA_n=\cI(F_n)$ is residually torsion-free 
nilpotent.  Hence, all its subgroups, such as $\IA^+_n$, 
$P_n$, $\wP_n$, and $\wP_n^+$ also enjoy this property.

Let us now look in more detail at the residual properties of 
the braid groups and their relatives.  We start with the classical 
braid groups.   As shown by Krammer \cite{Krammer02}
and Bigelow \cite{Bigelow01}, the braid groups $B_n$ admit faithful linear 
representations, and thus, by a theorem of Malcev, they are 
residually finite.  On the other hand, it was shown in \cite{Gorin-Lin69} 
by Gorin and Lin that $\gamma_2B_n=\gamma_3B_n$ for $n\ge 3$
(see \cite{BGG08} for an alternative proof); 
thus, the braid groups $B_n$ are not residually nilpotent for $n\ge 3$. 
Since both the welded braid group $\wB_n$ and the 
virtual braid group $\vB_n$ contain the braid group $B_n$ 
as a subgroup, we conclude that $\wB_n$ 
and $\vB_n$ are not residually nilpotent for $n\ge 3$, either 
(see also \cite{Bardakov-B09}).

A different approach to the residual properties of the pure braid groups 
was taken by Falk and Randell in \cite{Falk-Randell88}. Using the 
semi-direct product decomposition \eqref{eq:pn semi} and Lemma \ref{lem:FR}, 
these authors gave another proof that the groups $P_n$ are residually nilpotent;   
in fact, their proof shows that $P_n$ is 
residually torsion-free nilpotent, see  \cite{Bardakov-B09}. 
A similar approach, based on decomposition \eqref{eq:wpnplus semi}
provides another proof that the upper McCool groups  $wP_n^+$ 
are residually torsion-free nilpotent. 

From the work of Berceanu and Papadima \cite{Berceanu-Papadima09} 
mentioned above, we know that the full McCool groups $\wP_n$ 
are  residually torsion-free nilpotent.  For $n=3$, an even stronger 
result was proved by  Metaftsis and Papistas  \cite{MP15}, who 
showed that $\gr_k(\wP_3,\Z)$ is torsion-free for all $k$. 
Whether an analogous statement holds 
for the McCool groups $\wP_n$ with $n\ge 4$ is an open problem.

Finally, let us consider the virtual pure braid groups.   
Clearly, the groups $\vP_2^+=\Z$,  $\vP_2=F_2$, 
and $\vP_3^+=\Z*\Z^2$ are all residually torsion-free nilpotent. 
Bardakov et al.~\cite{BMVW} show that 
the group $\vP_3$ also enjoys this property. 
Alternatively, this can be seen from our decomposition 
$\vP_3 \cong \overline{P}_4 * \Z$, 
the fact that both $\overline{P}_4$ and $\Z$ have this property, and the 
aforementioned result of Malcev. 
Whether or not the groups $vP_n$ and $\vP_n^+$ are residually torsion-free nilpotent
for $n\ge 4$ is an open problem, see \cite{Bardakov-B09}.  

\subsection{Malcev Lie algebras and formality properties}
\label{subsec:malcev}

For each finitely-generated, torsion-free nilpotent group $N$, A.I.~Malcev \cite{Malcev} 
constructed a filtered Lie algebra $\fm(N)$ over $\Q$, 
which is now called the Malcev Lie algebra of $N$. Given a finitely generated group $G$,
the inverse limit of the Malcev Lie algebras of the torsion free parts of 
the nilpotent quotients $G/\gamma_i G$ for $i\geq 2$ defines the 
{\em Malcev Lie algebra}\/  of the group $G$, which we denote by $\fm(G)$. 
This Lie algebra coincides with the dual Lie algebra of the $1$-minimal 
model of $G$ defined by D.~Sullivan.
For relevant background,  we refer to \cite{SW1} and references therein. 

We will use another equivalent definition of the Malcev Lie algebra which
was given by Quillen in \cite{Quillen69}.
The group-algebra $\C{G}$ has a natural Hopf algebra structure
with comultiplication $\Delta\colon \C{G}\otimes \C{G}\to \C{G}$ given by 
$\Delta(x)=x\otimes x$ for $x\in G$. 
Let $\widehat{\C{G}}=\varprojlim_r \C{G}/I^r$ be the completion of $\C{G}$ 
with respect to the $I$-adic filtration, where $I$ is the augmentation ideal.  
An element $x\in \widehat{\C G}$ is called \textit{primitive} if 
$\widehat{ \Delta} x=x\hat{\otimes} 1+1\hat{\otimes} x$.
The Malcev Lie algebra $\fm(G)$ is then the set of 
all primitive elements in $\widehat{\C G}$, with Lie bracket 
$[x,y]=xy-yx$, and endowed with the induced filtration.

The group $G$ is said to be {\em filtered-formal}\/ if there exists an 
isomorphism of filtered Lie algebras, $\fm(G)\cong \widehat{\gr}(G)$. 
The group $G$ is \emph{$1$-formal}\/ if there exists
an filtered Lie algebra isomorphism $\fm(G)\cong \widehat{\fh}$, 
where $\fh$ is a quadratic Lie algebra. 
As shown in \cite{SW1}, a finitely generated group $G$ is $1$-formal if and only 
if it is both graded-formal and filtered-formal.  

\begin{theorem}
\label{thm:pbformal}
For each $n\ge 1$, the following hold. 
\begin{enumerate}
\item \textup{(\cite{Arnold69,Kohno83})} The pure braid group $P_n$ is $1$-formal.
\item \textup{(\cite{Berceanu-Papadima09})} The welded pure braid groups 
$\wP_n$ and $\wP_n^+$ are $1$-formal.  
\item \textup{(\cite{Bartholdi-E-E-R, Lee})} The virtual pure braid groups 
$\vP_n$ and $\vP_n^+$ are graded-formal.
\end{enumerate}
\end{theorem}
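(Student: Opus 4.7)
The plan is to dispose of graded formality first, and then to address filtered formality (which together with graded formality gives 1-formality) only where claimed in parts (1) and (2). For all five groups, the quadratic presentations of $\gr(G)$ recorded in Table \ref{table:grpure} yield graded formality immediately, by the definition of graded-formality. This settles part (3) in full, and reduces parts (1) and (2) to exhibiting an isomorphism of filtered Lie algebras $\fm(G) \cong \widehat{\gr(G)}$.

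For part (1), I would invoke Kohno's explicit presentation from \cite{Kohno83} of $\fm(P_n)$ as the degree completion of the Drinfeld--Kohno Lie algebra on generators $x_{ij}$ subject to the infinitesimal pure braid relations \eqref{eq:L2} and \eqref{eq:L4}. Since this matches the presentation of $\gr(P_n)$ in Table \ref{table:grpure}, the required filtered isomorphism follows. A conceptual alternative is to use that $\Conf_n(\C)$ is a classifying space for $P_n$ and is the complement of the braid arrangement, hence formal by Arnol{\cprime}d's computation \cite{Arnold69} and the general theory of formality for complex arrangement complements; 1-formality of $P_n$ then descends from formality of the classifying space.

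For part (2), I would follow Berceanu and Papadima \cite{Berceanu-Papadima09}. For $\wP_n^+$, the iterated almost-direct decomposition \eqref{eq:wpnplus semi} combined with the 1-formality of the free groups $F_k$ propagates filtered formality through each semi-direct factor, provided the monodromy acts trivially on the abelianization of the fibre---a condition verifiable directly from the structure of $\alpha^+_{n-1}$. For $\wP_n$ itself the semi-direct decomposition is no longer as clean, so the argument is algebraic: one constructs by hand a quadratic Drinfeld-type presentation of $\fm(\wP_n)$ that matches the presentation of $\gr(\wP_n)$ from Table \ref{table:grpure}, using the McCool relations as a starting point.

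The main obstacle is the filtered-formality step in part (2). As Table \ref{table:introHilb} makes explicit, filtered formality is a genuinely stronger condition than graded formality---witness the failure of 1-formality for $\vP_n$ and $\vP_n^+$ when $n \geq 4$, in spite of their graded formality. Consequently no argument relying only on the quadratic presentations listed in Table \ref{table:grpure} can suffice: one must actually identify $\fm(G)$, which is what Kohno's construction (for $P_n$) and the Berceanu--Papadima construction (for $\wP_n$ and $\wP_n^+$) do at the cost of substantial technical work.
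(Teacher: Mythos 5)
Your overall architecture matches how the paper handles this statement: Theorem \ref{thm:pbformal} is given in \S\ref{subsec:malcev} without a proof, as a survey of cited results, and the supporting discussion is exactly your first step --- graded-formality of all five groups is read off from the quadratic presentations in Table \ref{table:grpure} (which is where the actual content of part (3), due to Bartholdi et al.\ and Lee, lives), while the filtered-formality half of parts (1) and (2) is delegated to Kohno's presentation of $\fm(P_n)$ and to Berceanu--Papadima's quadratic presentations of $\fm(\wP_n)$ and $\fm(\wP_n^+)$. Your treatment of $P_n$ (Kohno's presentation, or formality of the arrangement complement $\Conf_n(\C)$) and of $\wP_n$ (an explicit quadratic Malcev presentation in the style of \cite{Berceanu-Papadima09}) is faithful to that.

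The gap is your argument for $\wP_n^+$. You propose to propagate filtered-formality up the iterated almost-direct product \eqref{eq:wpnplus semi}, under the hypothesis that the monodromy acts trivially on the abelianization of each fibre. That hypothesis is precisely the Falk--Randell condition of Lemma \ref{lem:FR}, and what it controls is the lower central series, hence $\gr(G)$; it says nothing about the filtered structure of $\fm(G)$. No propagation theorem of the form ``an almost-direct product of filtered-formal groups is filtered-formal'' is available, and you do not supply one: the induced action of $\fm(Q)$ on $\fm(N)$ can carry lower-order perturbations that need not be removable by a filtered isomorphism, which is exactly the phenomenon your own closing paragraph warns against. Nor can you fall back on the retract property of $1$-formality from \cite{SW1} to deduce the case of $\wP_n^+$ from that of $\wP_n$, since by Proposition \ref{prop:nonsplit} the inclusion $\wP_n^+\inj \wP_n$ is not split for $n\ge 4$. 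In the sources, \cite{Berceanu-Papadima09} treats $\wP_n^+$ by the same universal-representation method as $\wP_n$, producing an explicit quadratic presentation of its Malcev Lie algebra; in this paper the decomposition \eqref{eq:wpnplus semi} is used only for the LCS ranks and the residual properties, never for formality.
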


Let us now recall the following consequence of the `Tangent Cone Theorem' 
from \cite{DPS09}.

\begin{theorem}[\cite{DPS09}]
\label{thm:tangentcone}
Let G be a finitely generated, $1$-formal group. Then all irreducible 
components of $\cR_1(G)$ are rationally defined linear subspaces 
of $H^1(G,\C)$.
\end{theorem}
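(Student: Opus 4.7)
The plan is to relate $\cR_1(G)$ to the first characteristic variety
$$\mathcal{V}_1(G)=\{\rho\in \hom(G,\C^*):\dim H^1(G,\C_\rho)\geq 1\}\subseteq H^1(G,\C^*),$$
where $\C_\rho$ denotes the rank-one local system determined by $\rho$, and then exploit the transcendental structure of $\mathcal{V}_1$ in the $1$-formal case. The exponential map $\exp\colon H^1(G,\C)\to H^1(G,\C^*)$ is an analytic covering sending $0$ to the trivial character $1$. The key input, which is the Tangent Cone Theorem proper of \cite{DPS09}, asserts that when $G$ is $1$-formal, $\exp$ induces an analytic isomorphism of germs
$$(\cR_1(G),0)\xrightarrow{\simeq}(\mathcal{V}_1(G),1).$$
This identification reflects the fact that, under $1$-formality, the Malcev Lie algebra $\fm(G)$ is isomorphic to the degree completion of the quadratic Lie algebra $\gr(G)$, so both germs are controlled by the same quadratic data encoded in the cohomology ring $A=H^*(G,\C)$.

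The argument would then proceed in two steps. First, I would invoke the companion structure theorem for characteristic varieties of $1$-formal groups, also contained in \cite{DPS09} and in the spirit of Arapura: every irreducible component of $\mathcal{V}_1(G)$ passing through $1$ is an algebraic subtorus of $H^1(G,\C^*)$. Any such subtorus is the exponential image of a rationally defined linear subspace $L\subseteq H^1(G,\C)$, namely the joint kernel of a finite family of integral characters on $H_1(G,\Z)$, and its tangent cone at $1$ equals $L$ itself. Second, I would use that $\cR_1(G)$ is a homogeneous subvariety of $H^1(G,\C)$, being cut out by the vanishing of products $a\cup b$ of linear forms as in \eqref{eq:r1g}, and therefore coincides with its own tangent cone at $0$. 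Transporting this statement along the germ isomorphism yields $\cR_1(G)=\bigcup_\alpha L_\alpha$ as a finite union of rationally defined linear subspaces, and since each $L_\alpha$ is irreducible, these are precisely the irreducible components of $\cR_1(G)$.

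The main obstacle is the Tangent Cone Theorem itself, which has to reconcile two a priori distinct descriptions of the germ of $\mathcal{V}_1(G)$ at $1$: a transcendental one, coming from deformations of the trivial local system and controlled by the Malcev Lie algebra $\fm(G)$, and a purely linear-algebraic one produced by the Aomoto complex $(A,\delta_a)$, whose jump loci define $\cR_1(G)$. The bridge between these two descriptions is furnished by $1$-formality together with Sullivan's $1$-minimal model machinery and Goldman--Millson deformation theory, and this is the technical heart of the argument. A secondary delicate point is the subtorus structure of the components of $\mathcal{V}_1(G)$ through $1$, which rests on functoriality of the Malcev completion applied to homomorphisms from $G$ into suitable nilpotent targets and is what ultimately forces the components of $\cR_1(G)$ to be not merely linear but defined over $\Q$.
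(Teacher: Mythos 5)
The survey itself gives no proof of Theorem \ref{thm:tangentcone} --- it is quoted verbatim from \cite{DPS09} --- so your sketch has to be measured against the argument in that source. Your first pillar matches it: the engine is indeed the isomorphism of analytic germs $\exp\colon (\cR_1(G),0)\xrightarrow{\simeq}(\cV_1(G),1)$ coming from the identification of $\fm(G)$ with the completion of a quadratic Lie algebra, used together with the homogeneity of $\cR_1(G)$. The gap is your second pillar. The claim that every irreducible component of $\cV_1(G)$ through the identity is an algebraic subtorus is \emph{not} a theorem about $1$-formal groups: it is Arapura's structure theorem, whose hypothesis is that $G$ is a quasi-projective (quasi-K\"{a}hler) group, not that it is $1$-formal. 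One-formality determines only the \emph{germ} of $\cV_1(G)$ at $1$, via the Malcev Lie algebra; it gives no control over the global shape of the components of the characteristic variety, so no appeal to functoriality of Malcev completion can force them to be subtori. Moreover, even restricted to germs, knowing that the branches of $\cV_1(G)$ at $1$ are germs of subtori is, via the very germ isomorphism you invoke, \emph{equivalent} to the linearity and rationality of the components of $\cR_1(G)$ --- the statement being proved --- so as structured the argument is circular.

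What replaces your subtorus claim in \cite{DPS09} is the \emph{exponential tangent cone}: for any Zariski-closed subset $W$ of the character torus, the set $\tau_1(W)=\{z\in H^1(G,\C)\mid \exp(tz)\in W \text{ for all } t\in\C\}$ is a finite union of rationally defined linear subspaces. This lemma is elementary but essential; it is proved by evaluating the defining Laurent polynomials of $W$ along the arc $t\mapsto\exp(tz)$ and using the linear independence of the characters $t\mapsto e^{\lambda t}$, which forces $z$ to annihilate various integral vectors. Granting it, the proof runs: if $z\in\cR_1(G)$ then $tz\in\cR_1(G)$ for all $t$ by homogeneity, so $\exp(tz)\in\cV_1(G)$ for $\abs{t}$ small by the germ isomorphism, hence for all $t\in\C$ because $\{t\in\C\mid \exp(tz)\in\cV_1(G)\}$ is an analytic subset of $\C$ containing a disk; thus $\cR_1(G)\subseteq\tau_1(\cV_1(G))$. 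The reverse inclusion $\tau_1(\cV_1(G))\subseteq TC_1(\cV_1(G))=TC_0(\cR_1(G))=\cR_1(G)$ is formal from the germ isomorphism and homogeneity. Hence $\cR_1(G)=\tau_1(\cV_1(G))$ is a finite union of rationally defined linear subspaces, and these unions of distinct maximal subspaces are the irreducible components. If you substitute this lemma for your subtorus step, your outline becomes the actual proof.
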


In view of Proposition \ref{prop:resonancePV4+} and Theorem \ref{thm:tangentcone}, 
the group $\vP_4^+$ is not $1$-formal. In fact, we have the following theorem.

\begin{theorem}[\cite{SW3}]
\label{thm:pvn}
For the virtual pure braid group $\vP_n$ 
and its upper-triangular subgroup, $\vP^+_n$, the following hold:  
both are $1$-formal for $n\leq 3$,
and both are non-filtered-formal for $n\geq 4$.
\end{theorem}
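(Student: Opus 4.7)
The plan is to treat the cases $n\leq 3$ and $n\geq 4$ separately. For $n\leq 3$, I would exploit the explicit isomorphisms recorded in Section \ref{subsec:virtual}: $\vP_2^+\cong \Z$, $\vP_2\cong F_2$, $\vP_3^+\cong \Z\ast \Z^2$, and $\vP_3\cong \overline{P}_4\ast \Z$. Finitely generated free groups and finitely generated free abelian groups are classically $1$-formal, and $1$-formality is known to be preserved under finite free products, so the only point requiring comment is the $1$-formality of $\overline{P}_4$. This follows because the center splitting $P_4\cong \overline{P}_4\times \Z$ realises $\overline{P}_4$ as a retract of the $1$-formal group $P_4$ (Theorem \ref{thm:pbformal}(1)), and $1$-formality is inherited by retracts.

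For $n=4$, I would argue as follows. By Theorem \ref{thm:pbformal}(3), both $\vP_4$ and $\vP_4^+$ are graded-formal, so to disprove filtered-formality it suffices to disprove $1$-formality, since $1$-formal equals graded-formal plus filtered-formal. For $\vP_4^+$, combine Proposition \ref{prop:resonancePV4+} with the Tangent Cone Theorem \ref{thm:tangentcone}: the resonance variety $\cR_1(\vP_4^+)$ is an irreducible subvariety of $H^1(\vP_4^+,\C)=\C^6$ of degree $6$ and dimension $3$, hence not a union of rationally defined linear subspaces, so $\vP_4^+$ cannot be $1$-formal. For $\vP_4$, I would use the fact noted just before Proposition \ref{prop:groups} that the inclusion $\vP_4^+\hookrightarrow \vP_4$ admits a splitting; this realises $\vP_4^+$ as a retract of $\vP_4$, so if $\vP_4$ were $1$-formal then $\vP_4^+$ would inherit $1$-formality, a contradiction.

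For $n\geq 5$, I would bootstrap from the $n=4$ case via a coordinate retraction. From the quadratic presentations in Section \ref{subsec:virtual}, the rule sending $x_{ij}$ to itself when $\{i,j\}\subset\{1,2,3,4\}$ and to the identity otherwise extends to group homomorphisms $\vP_n\twoheadrightarrow \vP_4$ and $\vP_n^+\twoheadrightarrow \vP_4^+$ that split the evident inclusions: any defining relation involving a killed generator either becomes trivial on both sides or reduces to a tautology of the form $x_{ij}=x_{ij}$. Composing with the retraction $\vP_4\twoheadrightarrow \vP_4^+$ exhibits $\vP_4^+$ as a retract of each of $\vP_n$ and $\vP_n^+$. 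The retraction property for $1$-formality established in \cite{SW1} then propagates the failure of $1$-formality from $\vP_4^+$ to $\vP_n$ and $\vP_n^+$ for all $n\geq 4$; combined with graded-formality this is exactly non-filtered-formality.

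The principal obstacle is Proposition \ref{prop:resonancePV4+}: pinning down $\cR_1(\vP_4^+)$ as a genuinely nonlinear threefold demands a careful computation with the quadratic cohomology algebra $H^*(\vP_4^+,\C)$, and this is the crux of the argument. Once that computation (carried out in \cite{SW2}) is in hand, the rest is a standard assembly of the tangent cone obstruction and the retraction principle, and the verifications for $n\leq 3$ and $n\geq 5$ are essentially formal.
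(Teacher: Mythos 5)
Your proposal is correct and follows essentially the same route as the paper: the survey's own justification is exactly the combination of Proposition \ref{prop:resonancePV4+} with the Tangent Cone Theorem \ref{thm:tangentcone} to rule out $1$-formality of $\vP_4^+$, followed by the retraction property for $1$-formality from \cite{SW1} to propagate the failure to $\vP_n$ and $\vP_n^+$ for all $n\geq 4$, with graded-formality (Theorem \ref{thm:pbformal}(3)) converting non-$1$-formality into non-filtered-formality, and the explicit isomorphisms $\vP_2^+\cong\Z$, $\vP_2\cong F_2$, $\vP_3^+\cong\Z\ast\Z^2$, $\vP_3\cong\overline{P}_4\ast\Z$ handling $n\leq 3$. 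The details you supply beyond the paper's sketch (the coordinate retractions onto the $n=4$ case and the free-product/retract argument for small $n$) are sound and consistent with what the paper leaves implicit.
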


\subsection{Chen ranks}
\label{subsec:Chen}

The \textit{Chen Lie algebra}\/ of a finitely generated group $G$ is defined 
to be the associated graded Lie algebra of its second derived quotient, 
$G/G^{\prime\prime}$. The projection $\pi\colon G\surj G/G^{\prime\prime}$ induces 
an epimorphism, $\gr(\pi) \colon \gr(G)\surj \gr(G/G^{\prime\prime})$. It is readily 
verified that $\gr_k(\pi)$ is an isomorphism for $k\leq 3$.

In \cite{Chen51}, K.-T. Chen gave a method for finding a basis for 
$\gr(G/G^{\prime\prime})$ via a path integral technique for free groups.  In  
the process, he showed that the Chen ranks of the free group of rank $n$ 
are given by $\theta_1(F_n)=n$ and 
\begin{equation}
\label{eq:chen fn}
\theta_k(F_n)=\binom{n+k-2}{k} (k-1), ~ \textrm{ for } k\ge 2.
\end{equation}

As shown by Massey in \cite{Massey}, the Chen ranks of a group 
$G$ can be computed from the Alexander invariant $G'/G''$.
In \cite{Cohen-Suciu95, CS99}, Cohen and Suciu developed this method, 
by introducing the use of Gr\"{o}bner basis techniques in this context.  
As an application, they showed in \cite{Cohen-Suciu95} that 
the Chen ranks of the pure braid groups $P_n$ are given by 
\begin{equation}
\label{eq:chen pn}
\theta_k(P_n)=(k-1)\binom{n+1}{4}, ~\textrm{ for } k\geq 3.
\end{equation}

Based on this and many other similar computations, the first author 
conjectured in \cite{Suciu01} that for $k \gg 0$, the Chen ranks of 
an arrangement group $G$ are given by
\begin{equation}
\label{eq:ChenranksConjecture}
\theta_k(G)= \sum_{m\geq 2}c_m\cdot \theta_k(F_m) , 
\end{equation}
where $c_m$ is the number of $m$-dimensional components of $\cR_1(G)$.
Much work has gone into proving this conjecture, with special cases being 
verified in  \cite{SS02, PS04, SS06}.  A key advance was made in \cite{PS04}, 
where it was shown that the Chen ranks of a finitely presented, $1$-formal 
group $G$ are determined by the truncated cohomology ring $H^{\le 2}(G,\C)$. 

Using this fact, Cohen and Schenck show in \cite{Cohen-Schenck15}  that, 
for a finitely presented, commutator-relators $1$-formal group $G$, the Chen ranks 
formula \eqref{eq:ChenranksConjecture} holds, provided the components of 
$\cR_1(G)$ are isotropic, projectively disjoint, and reduced as a scheme. 
They also verify that arrangement groups and the welded pure braid groups 
$\wP_n$ satisfy these conditions.  From the Chen ranks formula 
\eqref{eq:ChenranksConjecture} and the first resonance varieties 
of $\wP_n$ in \eqref{eq:r1wpn}, they deduce that for $k\gg 1$, the 
Chen ranks of $\wP_n$ are given by
\begin{equation}
\label{eq:chenranksmccool}
\theta_k(\wP_n)=(k-1)\binom{n}{2}+(k^2-1) \binom{n}{3}. 
\end{equation}

We conjecture that formula \eqref{eq:chenranksmccool} holds for all $n$ and all $k\geq 4$.
We have verified this conjecture for $n\leq 8$, based on direct computations of the Chen ranks 
of the groups $wP_n$ in that range.

 However, the resonance varieties of 
$\wP_n^+$ do not satisfy the isotropicity hypothesis.  Nevertheless, we compute 
in \cite{SW3} the Chen ranks of these groups, using the Gr\"{o}bner basis algorithm 
outlined in \cite{Cohen-Suciu95}. The result reads as follows.
 
\begin{theorem}[\cite{SW3}]
\label{thm:ChenRanksIntro}
The Chen ranks of $\wP_n^+$ are given by
$\theta_1=\binom{n}{2}$, $\theta_2=\binom{n}{3}$, $\theta_3=2\binom{n+1}{4}$, and
\begin{equation*}
\theta_k=\binom{n+k-2}{k+1}+\theta_{k-1}=
\sum\limits_{i=3}^k\binom{n+i-2}{i+1}+ \binom{n+1}{4}
\end{equation*}
for $k\geq 4$.
\end{theorem}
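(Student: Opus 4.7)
The plan is to reduce the computation of $\theta_k(\wP_n^+)$ to a Hilbert-series calculation for an explicit graded module over a polynomial ring, and then to carry out that calculation via the Gröbner-basis algorithm of \cite{Cohen-Suciu95, CS99}.

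First I would set up the commutative-algebra model for the Chen ranks. Since $\wP_n^+$ is $1$-formal by Theorem \ref{thm:pbformal}(2) and commutator-related (the McCool presentation from \S\ref{subsec:welded} uses only commutator-type relators), the work of Papadima--Suciu \cite{PS04} implies that the Chen ranks of $\wP_n^+$ depend only on the truncated cohomology ring $H^{\leq 2}(\wP_n^+;\C)$. Combined with Massey's description \cite{Massey} of Chen ranks via the Alexander invariant $G'/G''$, this identifies $\theta_k(\wP_n^+)$ for $k\ge 2$ with the dimension of the graded piece $B_{k-2}$ of the linearized Alexander invariant $B = B(\wP_n^+)$, viewed as a module over $S = \C[x_{ij} : 1 \leq i<j\leq n]$ and presented by the linearized Fox Jacobian of the McCool relations (equivalently, dual to the quadratic cohomology relations \eqref{eq:I5}).

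The low-degree cases are handled directly. The equality $\theta_1 = \binom{n}{2}$ is immediate from counting the generators $\alpha_{ij}$ with $i<j$. Since $G'' \subseteq \gamma_4 G$ for any group $G$, we have $\gr_k(G/G'') = \gr_k(G)$ for $k \leq 3$, so $\theta_2$ and $\theta_3$ coincide with the LCS ranks $\phi_2(\wP_n^+)$ and $\phi_3(\wP_n^+)$. Using the identification of the LCS ranks with those of $P_n$ (both having Hilbert series $\prod_{j=1}^{n-1}(1+jt)$) together with Witt's formula applied along the iterated semi-direct product \eqref{eq:wpnplus semi} via Lemma \ref{lem:FR}, one obtains $\theta_2 = \sum_{j=1}^{n-1} \binom{j}{2} = \binom{n}{3}$ and $\theta_3 = \sum_{j=1}^{n-1} \frac{j(j-1)(j+1)}{3} = 2\binom{n+1}{4}$.

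The heart of the proof is the recursion for $k \geq 4$, which I would establish by constructing an explicit Gröbner basis for the presenting module of $B$. I would pick a term order on $S$ adapted to the upper-triangular McCool generators---for instance, a reverse-lexicographic order in which $x_{ij}$ is ordered first by the second index $j$ and then by the first index $i$. The semi-direct-product decomposition $\wP_n^+ = F_{n-1} \rtimes \wP_{n-1}^+$ from \eqref{eq:wpnplus semi} suggests that the Gröbner basis stratifies into blocks indexed by $j = 2, \ldots, n$, with the top block (corresponding to $j = n$) contributing the ``new'' standard monomials that account for the increment $\theta_k - \theta_{k-1}$. Counting these new monomials in degree $k-2$ should yield exactly $\binom{n+k-2}{k+1}$, after which the claimed telescoping closed form follows from the base value $\theta_3 = 2\binom{n+1}{4}$. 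The principal obstacle is certifying completeness of this Gröbner basis: one must show that the $S$-polynomial reductions arising from the Jacobi-type relations \eqref{eq:L1} and the triple commutation relations \eqref{eq:L3} of $\gr(\wP_n^+)$ produce no leading monomials outside the predicted staircase pattern. This appears to require a delicate double induction on $n$ and $k$, guided by computer-algebra verification (e.g.\ in \texttt{Macaulay2}) for small $n$ to pin down the correct term order and to check the closed-form enumeration.
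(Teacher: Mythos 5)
Your overall strategy is the one the paper itself points to: Theorem \ref{thm:ChenRanksIntro} is quoted from \cite{SW3}, where the result is obtained by running the Gr\"{o}bner-basis algorithm of \cite{Cohen-Suciu95, CS99} on the (linearized) Alexander invariant, exactly as you propose. Your reduction to a graded module $B$ over $S=\C[x_{ij}: i<j]$ via $1$-formality and \cite{PS04}, and your treatment of $\theta_1,\theta_2,\theta_3$ using $G''\subseteq\gamma_4 G$ together with the Witt formula along the decomposition \eqref{eq:wpnplus semi}, are correct and match the framework the paper describes.

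The gap is that the argument stops exactly where the theorem begins. The entire content of the statement for $k\ge 4$ is the identity $\theta_k-\theta_{k-1}=\binom{n+k-2}{k+1}$, and your proposal neither writes down the presentation matrix of $B$, nor exhibits the claimed Gr\"{o}bner basis, nor carries out the enumeration of standard monomials that is supposed to produce $\binom{n+k-2}{k+1}$: you say the count ``should yield'' this number and that completeness of the basis ``appears to require a delicate double induction \dots\ guided by computer-algebra verification.'' As written, this is a description of a computation rather than the computation itself. Two things must be supplied: (i) the explicit staircase of leading monomials, from which the Hilbert series of the quotient by the initial module can be read off and summed to the closed form $\sum_{i=3}^k\binom{n+i-2}{i+1}+\binom{n+1}{4}$; and (ii) a certification, valid for all $n$ and $k$, that no further $S$-pair reductions produce new leading terms --- checking small $n$ in \texttt{Macaulay2} pins down the answer but does not prove it. Note also that you cannot shortcut (ii) by appealing to the resonance--Chen ranks formula \eqref{eq:ChenranksConjecture}: as recorded in Table \ref{table:resonance}, that formula fails for $\wP_n^+$ precisely because the components of $\cR_1(\wP_n^+)$ are not isotropic, which is why the direct Gr\"{o}bner-basis computation is needed in the first place.
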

 
We have seen previously that the pure braid group $P_n$, 
the upper McCool group $\wP_n^+$, and
the group $\Pi_n=\prod_{i=1}^{n-1}F_{i}$ share the same LCS ranks 
and the same Betti numbers.  Furthermore, the centers of all these 
groups are infinite cyclic, provided $n\ge 2$. 
However, the Chen ranks can distinguish these groups.
 
\begin{cor}[\cite{SW3}]
\label{cor:NotIso}
For $n\geq 4$, the pure braid group $P_n$, the upper McCool group $\wP_n^+$, 
and the group $\Pi_n$ are all pairwise non-isomorphic.
\end{cor}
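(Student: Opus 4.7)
The plan is to use the Chen ranks $\theta_k$ as isomorphism invariants to separate the three groups. Since the Chen Lie algebra $\gr(G/G'')$ depends only on the isomorphism type of $G$, the sequence $\{\theta_k(G)\}_{k\ge 1}$ is intrinsic to $G$, so it suffices to show that the three sequences $\{\theta_k(P_n)\}$, $\{\theta_k(\wP_n^+)\}$, and $\{\theta_k(\Pi_n)\}$ are pairwise distinct for every $n\ge 4$.

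The first two sequences are already available in closed form. By \eqref{eq:chen pn}, $\theta_k(P_n) = (k-1)\binom{n+1}{4}$ is a polynomial of degree $1$ in $k$. For $\wP_n^+$, applying the hockey-stick identity $\sum_{j=r}^{m}\binom{j}{r} = \binom{m+1}{r+1}$ to the summation in Theorem~\ref{thm:ChenRanksIntro} converts it into
\[
\theta_k(\wP_n^+) \;=\; \binom{n+k-1}{n-2} - \binom{n+1}{n-2} + \binom{n+1}{4},
\]
valid for $k\ge 3$, which is a polynomial of degree $n-2$ in $k$ with leading coefficient $1/(n-2)!$.

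For $\Pi_n = \prod_{i=1}^{n-1} F_i$, the key observation is that the Chen ranks are additive over direct products: since $\gamma_k(A\times B) = \gamma_k(A) \times \gamma_k(B)$ and $(A\times B)'' = A''\times B''$, the Chen Lie algebra of a direct product splits as a direct sum of the Chen Lie algebras of the factors. Combined with Chen's formula \eqref{eq:chen fn} for the free groups $F_i$ and a second hockey-stick summation, this yields
\[
\theta_k(\Pi_n) \;=\; (k-1)\binom{n+k-2}{k+1}
\]
for $k\ge 2$, again a polynomial of degree $n-2$ in $k$, but with leading coefficient $1/(n-3)!$.

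The three sequences are then separated for $n\ge 4$ by comparing growth: $\theta_k(P_n)$ is linear in $k$ while $\theta_k(\wP_n^+)$ and $\theta_k(\Pi_n)$ are of degree $n-2\ge 2$, so $P_n$ is not isomorphic to either of the other two; and the leading coefficients $1/(n-2)!$ and $1/(n-3)!$ are unequal for $n\ge 4$, so $\wP_n^+ \not\cong \Pi_n$ as well. The step I expect to require the most care is verifying additivity of $\theta_k$ over direct products; the hockey-stick simplifications and the asymptotic comparison are straightforward. As a sanity check, for $n=4$ and $k=4$ one computes $\theta_4(P_4) = 15$, $\theta_4(\wP_4^+) = 16$, and $\theta_4(\Pi_4) = 18$, already exhibiting the three groups as pairwise non-isomorphic in the first nontrivial case.
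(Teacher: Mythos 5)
Your proposal is correct and follows the same route the paper indicates: the corollary is deduced from the Chen ranks, using \eqref{eq:chen pn} for $P_n$, Theorem \ref{thm:ChenRanksIntro} for $\wP_n^+$, and Chen's formula \eqref{eq:chen fn} together with additivity over direct products for $\Pi_n$. Your closed forms and the sanity check $\theta_4(P_4)=15$, $\theta_4(\wP_4^+)=16$, $\theta_4(\Pi_4)=18$ all check out, so this matches the paper's argument.
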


\begin{rem}
\label{rem:bm}
The fact that $P_n\not\cong \wP_n^+$ for $n\ge 4$ answers in the negative  
Problem 1 from \cite[\S{10}]{Cohen-P-V-Wu}.  An alternate solution for $n=4$ was 
given by Bardakov and Mikhailov in \cite{Bardakov-Mikhailov08}, but that 
solution relies on the claim that the single-variable Alexander polynomial 
of a finitely presented group $G$ is an invariant of the group, a claim 
which is far from being true if $b_1(G)>1$.
\end{rem}

The Chen ranks of the virtual pure braid groups and their upper triangular 
subgroups are more complicated. We summarize some of our computations 
of these ranks, as follows. 

\begin{align*}
\sum_{k\geq 2} \theta_k(\vP_3^+) t^{k-2} &= (2-t)/(1-t)^3 ,\\
\sum_{k\geq 2} \theta_k(\vP_4^+) t^{k-2} &= (8-3t+t^2)/(1-t)^4 ,\\
\sum_{k\geq 2} \theta_k(\vP_5^+) t^{k-2} &=(20+15t+5t^2)/(1-t)^4,\\  
\sum_{k\geq 2} \theta_k(\vP_6^+) t^{k-2} &=(40+35t-40t^2-20t^3)/(1-t)^5.   \\
\sum_{k\geq 2} \theta_k(\vP_3) t^{k-2} &= (9-20t+15t^2-4t^4+t^5)/(1-t)^6.
\end{align*}

It would be interesting to find closed formulas for the Chen ranks of 
the groups $\vP^+_n$ and $\vP_n$, but this seems to be a very challenging 
undertaking.
 
\newcommand{\arxiv}[1]
{\texttt{\href{http://arxiv.org/abs/#1}{arXiv:#1}}}
\newcommand{\doi}[1]
{\texttt{\href{http://dx.doi.org/#1}{doi:#1}}}
\newcommand*\MR[1]{%
\StrBefore{#1 }{ }[\temp]%
\href{http://www.ams.org/mathscinet-getitem?mr=\temp}{MR#1}}

\providecommand{\bysame}{\leavevmode\hbox to3em{\hrulefill}\thinspace}
\providecommand{\MR}{\relax\ifhmode\unskip\space\fi MR }
% \MRhref is called by the amsart/book/proc definition of \MR.
\providecommand{\MRhref}[2]{%
  \href{http://www.ams.org/mathscinet-getitem?mr=#1}{#2}
}
\providecommand{\href}[2]{#2}

\end{document}